\numberwithin{equation}{section}
\newcommand{\abs}[1]{{\vert #1\vert}}
\newcommand{\R}{{\mathbb R}}
\newcommand{\RR}{\mathbb{R}}
\DeclareMathOperator{\dist}{dist}
\newtheorem{thm}{Theorem}[section]
\newtheorem{lem}[thm]{Lemma}
\newtheorem{cor}[thm]{Corollary}
\theoremstyle{definition}
\newtheorem{rem}[thm]{Remark}
\title[Electrostatics Analogy in Nematic Suspensions]{Far-Field Expansions for Harmonic Maps and the Electrostatics Analogy in Nematic Suspensions}
\date{\today}
\author[Alama,Bronsard]{Stan Alama \and Lia Bronsard}
\address{Department of Mathematics and Statistics, McMaster University, Hamilton, ON, Canada.}
\email{alama@mcmaster.ca,bronsard@mcmaster.ca}
\author[Lamy]{Xavier Lamy}
\address{Institut de Math\'ematiques de Toulouse; UMR 5219, Universit\'e de Toulouse; CNRS, UPS IMT, F-31062 Toulouse Cedex 9, France}
\email{Xavier.Lamy@math.univtoulouse.fr}
\author[Venkatraman]{Raghavendra Venkatraman}
\address{Courant Institute of Mathematical Sciences, New York University, NY, USA.}
\email{raghav@cims.nyu.edu}
\begin{document}

	\begin{abstract}
		For a smooth bounded domain $G\subset\R^3$  we consider maps $n\colon\mathbb R^3\setminus G\to\mathbb S^2$ minimizing the energy \(E(n)=\int_{\mathbb R^3\setminus G}|\nabla n|^2 +F_s(n_{\lfloor\partial G})\) among $\mathbb S^2$-valued map such that $n(x)\approx n_0$ as $|x|\to\infty$. 
		This is a model for a particle $G$ immersed in nematic liquid crystal. The surface energy $F_s$ describes the anchoring properties of the particle, and can be quite general. 
		We prove that such minimizing map $n$ has an asymptotic expansion in powers of $1/r$.
		Further, we show that the leading order $1/r$ term is uniquely determined by the far-field condition $n_0$ for almost all $n_0\in\mathbb S^2$, 
		by relating it to the gradient of the minimal energy with respect to $n_0$. 
		We derive various consequences of this relation in physically motivated situations:
		when the orientation of the particle $G$ is stable relative to a prescribed far-field alignment $n_0$; and when the particle $G$ has some rotational symmetries.
		In particular, these corollaries justify some approximations that can be found in the physics literature to describe  nematic suspensions via 
		a so-called electrostatics analogy.
	\end{abstract}
	
	\maketitle
	
	\section{Introduction}

	%%%%%%%%%%%%%%%
	The goal of this work is to investigate the so-called electrostatics analogy in the analysis of nematic suspensions or colloids: these consist of small particles immersed in a  nematic liquid crystal matrix. 
	The presence of these particles and their alignment  induces elastic strains in nematic medium; 
	what results is a complex strain-alignment coupling yielding novel high-functional composite materials. 
	Examples include dilute ferronematics, where the suspended particles are ferromagnetic inclusions; organizing carbon nanotubes using liquid crystals; ferroelectrics; and living liquid crystals, where the suspended particles are swimming bodies (e.g. flagellated bacteria). Further details on the numerous applications of such systems may be found in  the review articles \cite{lavrentovich20,musevic19}.

	Mathematical studies of colloid inclusions in nematics have tended to follow two different directions.  Several papers have addressed homogenization of nematics with a dense array of colloids (see, e.g., \cite{BCG05,BK,CDGP, CZ20,CZ20design}), while others consider the presence of point or ring singularities induced by a single colloid particle (see, e.g., \cite{alamabronsardlamy16saturn,alamabronsardlamy17, ACS21, ABGL21, ACS22}). In this paper we adopt the setting of the second set of papers, but concentrate on the effect of the colloid geometry on the far-field behavior of the nematic rather than the local structure of singularities near the colloid surface.

	The  electrostatics analogy is {commonly used} to describe colloidal suspensions in the case of a dilute concentration of particles. It originates in the work \cite{brocharddegennes70} by Brochard and de Gennes, and has been developed further by several authors in the physics literature \cite{kuksenok96, ramaswamy96,lubensky98}. It relies on considering   each single  particle separately and postulating that:
	\begin{itemize}
		\item far away from the particle the distortion in nematic alignment can be viewed as a perturbation of uniform alignment and taken to solve the corresponding linearized equation -- the representation formula for solutions of that linearized equation then  provides a specific asymptotic expansion,
		\item the first few coefficients of that asymptotic expansion are characterized by the properties (size, symmetries, etc.) of the   particle.
	\end{itemize}
	Then one formally replaces the nonlinear effect of each colloid particle by some singular source terms (derivatives of Dirac masses) in the linearized equation,
	according to the terms in the asymptotic expansion, which are derivatives of the fundamental solution (see Remark \ref{r:eqn0}). 
	In the one-constant approximation for the  elastic energy of the nematic, this amounts to the equation satisfied by an electric potential in the presence of charged multipoles, hence the name ``electrostatics analogy''.
	This simplification, intuitively valid for dilute enough suspensions, allows for
	an explicit  calculation of the energy of a given configuration in terms of the respective positions and properties of each   particle, leading to the ultimate goal: computation of interparticle interactions.
	
	In this article we provide a few elements towards mathematically quantifying the  electrostatics analogy, rigorously obtaining an asymptotic expansion for solutions of the original non-linear and non-convex minimization problem, and comparing it with a multipole expansion of a harmonic function.
	What seems to us the most challenging part is the second bullet-point above: 
	relating the coefficients of the asymptotic expansion to the particle's properties.
	Indeed, various mathematical obstacles defy a straightforward calculation of an expansion of minimizers:  for instance, minimizers may not be unique, and it is unknown whether the symmetry of the particle system imposes a corresponding symmetry on the minimizing nematic configuration. 
	Nevertheless we do obtain some results in that direction for the leading-order term of the expansion.

	Specifically, we consider a single   particle  $G\subset\R^3$ (smooth and bounded) surrounded by nematic liquid crystal. 
	A configuration of nematic alignment is represented by a director field $n\colon\R^3\setminus G\to \mathbb S^2$, and its energy (within the one-constant approximation) is given by
	\begin{align*}
		E(n)=\int_{\R^3\setminus G}|\nabla n|^2 + F_s(n_{\lfloor\partial G}),
	\end{align*}
	where $F_s\colon H^{1/2}(\partial G;\mathbb S^2)\to [0,\infty]$ can be a very general surface energy reflecting the particle's anchoring properties. 
	Uniform alignment
	at far field,
	loosely expressed as  $n(x)\approx n_0\in\mathbb S^2$ for $r=|x|\to\infty$, 
	is imposed through the condition
	\begin{align*}
		\int_{\R^3\setminus G} \frac{|n-n_0|^2}{1+r^2}  \lesssim \int_{\R^3\setminus G}|\nabla n|^2   <\infty.
	\end{align*}
	
	In other words, we are imposing that $n-n_0$ belongs to the completion of smooth maps with bounded support, with respect to the distance induced by the $H^1$ semi-norm; the weight $1/(1+r^2)$ is given by Hardy's inequality.
	Here and in the rest of the article, $A\lesssim B$ means $A\leq C B$ for some absolute constant $C>0$
	%\st{(or depending on explicitly specified parameters in the Appendix)}.
	
	Equilibrium configurations satisfy the harmonic map equation
	\begin{align*}
		-\Delta n =|\nabla n|^2 n\qquad\text{in }\R^3\setminus G.
	\end{align*}
	Loosely speaking, we prove that:
	\begin{itemize}
		\item minimizing configurations have an asymptotic expansion determined by the linearized equation $\Delta n=0$, however one cannot   discard non-harmonic corrections -- see Theorem~\ref{t:expansion};
		\item generically, the leading-order $\mathcal O(1/r)$ term in that expansion is uniquely determined by the particle $G$  and the far-field uniform alignment $n_0$ -- see Theorem~\ref{t:torque}.
	\end{itemize}
	The first point
	is a result about minimizing harmonic maps in an exterior domain, 
	independent of the presence of a particle (since we do not explicitly relate the expansion's coefficients to the particle).
	The second point is obtained by connecting the leading-order term to the 
	variation of minimal energy induced by keeping the particle $G$ fixed and rotating the far-field alignment $n_0$. 
	This is related to formal calculations in \cite{brocharddegennes70}
	for the torque exerted by the particle on the nematic (see Remark~\ref{r:BdG}).
	%As a consequence, we see that
	%non-harmonic corrections are much smaller when the orientation of the particle is locally minimizing relative to variations in the prescribed far-field direction; see Corollary~\ref{c:eqn0}.
	%Moreover, symmetry properties of the particle and of the surface energy induce corresponding symmetries in the far-field expansion at leading order (Corollary~\ref{c:sym}).
	
	%\st{Unfortunately, we are not} 
	We have not been able yet to obtain similar characterizations for the next-order terms in the expansion.

	%\st{Coming back to} 
	In terms of the electrostatics analogy developed in the physics literature, 
	the main input of our results is to
	clarify the first postulate 
	(that the far field distortions generated by a particle are purely harmonic to large order)
	by sheding new light on the second postulate (that these distortions are uniquely characterized by the particle).
	More precisely, in \cite{brocharddegennes70,kuksenok96,ramaswamy96,lubensky98}, 
	the possible presence of nonharmonic corrections is either not considered, 
	or implicitly deduced from a hypothetical uniqueness principle which would ensure
	that symmetry properties of the particle directly translate into symmetry properties of the full configuration (such uniqueness/symmetry principle seems however difficult to prove).
	Here instead we deduce that nonharmonic corrections are negligible 
	from our characterization of the leading-order term, bypassing any uniqueness or symmetry properties of the full configuration.
	This is valid for instance in the case of a spherical particle (see Corollary~\ref{c:sym}), but also when the orientation of the particle is at equilibrium (locally minimizing relative to variations in the prescribed far-field alignment, see Remark~\ref{r:eqn0}), independently of its symmetry properties. 
	Moreover we stress that, for an axisymmetric particle, it is not evident that the equilibrium orientation should be the most symmetric one (see Remark~\ref{r:axisym}).

	Below we state our results in more detail.

	\subsection{Far-field expansion for harmonic maps}
	
	Our
	first main result is a far-field
	expansion for
	harmonic maps in an exterior domain, which (by rescaling) we may without loss of generality assume to contain $\mathbb R^3\setminus \overline B_1$. 
	%So we let
	% $n_0\in\mathbb S^2$ and $n\colon \R^3\setminus \overline B_1\to\mathbb S^2$ be such that 
	%\begin{align}\label{eq:farfield}
	%\int_{\R^3\setminus \overline B_1} \frac{|n-n_0|^2}{r^2} \lesssim\int_{\R^3\setminus \overline B_1}|\nabla n|^2 <\infty,
	%\end{align}
	%and $n$ is locally energy-minimizing, that is,
	%\begin{align*}
	%\int_{\R^3\setminus\overline B_1}|\nabla n|^2 \leq \int_{\R^3\setminus \overline B_1}|\nabla \tilde n|^2,
	%\end{align*}
	%for any $\mathbb S^2$-valued map $\tilde n$ which agrees with $n$ outside of a compact subset of $\mathbb R^3\setminus \overline B_1$. 
	Our first main result is a far-field expansion for such minimizing maps.

	\begin{thm}\label{t:expansion}
		Let $n_0\in\mathbb S^2$. 
		Assume that $n\in H^1_{loc}(\R^3\setminus \overline B_1;\mathbb S^2)$
		satisfies
		\begin{align}\label{eq:farfield}
			\int_{\R^3\setminus \overline B_1} \frac{|n-n_0|^2}{r^2} \lesssim\int_{\R^3\setminus \overline B_1}|\nabla n|^2 <\infty,
		\end{align}
		and $n$ is locally energy-minimizing, that is,
		\begin{align*}
			\int_{\R^3\setminus\overline B_1}|\nabla n|^2 \leq \int_{\R^3\setminus \overline B_1}|\nabla \tilde n|^2,
		\end{align*}
		for any $\mathbb S^2$-valued map $\tilde n$ which agrees with $n$ outside of a compact subset of $\mathbb R^3\setminus \overline B_1$.
		Then 
		there exist $v_0,p_j,c_{k\ell}\in\R^3$ ($1\leq j,k,\ell\leq 3$) 
		such that, 
		as $r=|x|\to\infty$,
		\begin{align}\label{eq:expansion}
			n&= n_0 + n_{harm} +n_{corr} + \mathcal O\left(\frac{1}{r^4}\right),\\
			n_{harm}& =\frac{1}{r}v_0 +  \sum_{j=1}^3 p_j\partial_j\left(\frac 1r\right) 
			+ \sum_{k,\ell=1}^3 c_{k\ell}\partial_k\partial_\ell\left(\frac 1r\right),\quad v_0,p_j,c_{k\ell}\in\R^3,\nonumber\\
			n_{corr}& = -\frac{|v_0|^2}{r^2}n_0 - \frac{|v_0|^2}{6r^3}v_0
			-\frac{1}{3r}   \sum_{j=1}^3 v_0\cdot p_j\,\partial_j\left(\frac 1r\right) \, n_0.
			\nonumber
		\end{align}
		Moreover the vectors $v_0$, $p_j$ ($j=1,2,3$) are orthogonal to $n_0$.
	\end{thm}

	The far-field expansion \eqref{eq:expansion} consists of a harmonic part $n_{harm}$ solving the linearized equation $\Delta n_{harm}=0$, and of a non-harmonic correction $n_{corr}$.
	Interestingly, if the coefficient $v_0$ of the leading-order term in $n_{harm}$ vanishes, then the non-harmonic correction vanishes and 
	$n$ admits a harmonic expansion up to $\mathcal O(1/r^4)$. 
	Higher-order non-harmonic corrections would not have that property.
	This is why we stop the expansion at this order, even though it will be clear from the proof that one can obtain an expansion at any arbitrary order.
	The relations  $v_0\cdot n_0 =p_j\cdot n_0=0$ simply come from the constraint $|n|^2=1$, which also imposes similar relations about the higher order coefficients $c_{k\ell}$,
	but we do not write them  explicitly because they   do not have such a precise geometric interpretation.

	\begin{rem}\label{r:general}
		The proof of Theorem~\ref{t:expansion} can be   generalized to obtain far-field expansions for any manifold-valued map $u\colon \R^d\setminus \overline B_1\to\mathcal N\subset \R^k$ ($d\geq 3$)  with given far-field value $u_0\in\mathcal N$ in the sense $\int r^{-2}|u-u_0|^2  <\infty$, minimizing
		the Dirichlet energy.
		In the context of nematic liquid crystals with unequal elastic constants, it is interesting to consider more general energies of the form
		$\int A(u)[\nabla u,\nabla u]$, 
		where $A(u)$ is a positive definite bilinear form on $\R^{k\times n}$ depending smoothly on $u$.
		Far field asymptotics should then be dictated by the linearized system $\nabla\cdot A(u_0)\nabla v=0$, for which multipole expansions in terms of derivatives of the fundamental solution are described e.g. in \cite{BGO}.
		We expect that the tools developed in the present work will apply to that generalized setting, but do not provide the technical details here.
	\end{rem}
	
	We will obtain below
	various sufficient conditions ensuring that $v_0=0$, and so $n_{corr}=0$.
	For now, it is worth noting that $v_0$ vanishes for axisymmetric configurations. The map $n:\RR^3 \setminus \overline{B}_1 \rightarrow \mathbb{S}^2$ is axisymmetric about $n_0$ if for any rotation $R$ of axis $n_0$ one has
	\begin{align*}
		n(Rx)=Rn(x)\qquad\forall x\in\Omega.
	\end{align*}
	Using the far-field expansion \eqref{eq:expansion} in this identity implies $Rv_0=v_0$ for all rotations $R$ of axis $n_0$, and therefore $v_0=0$ since $v_0\cdot n_0=0$.
	
	\begin{cor}\label{c:symexp}
		If the minimizing map $n$ is axisymmetric about $n_0$, then $n=n_0+n_{harm}+\mathcal O(1/r^4)$ as $r=|x|\to\infty$, with $\Delta n_{harm}=0$.
	\end{cor}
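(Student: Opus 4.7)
The argument essentially formalizes the observation made in the paragraph preceding the statement. The plan is to invoke Theorem~\ref{t:expansion} to put $n$ in the form \eqref{eq:expansion}, to use axisymmetry to force the $1/r$ coefficient $v_0$ to vanish, and then to read off from the explicit formula for $n_{corr}$ that it vanishes identically.

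First I would isolate the leading order. From \eqref{eq:expansion},
\[
n(x) = n_0 + \frac{1}{r} v_0 + \mathcal O\!\left(\tfrac{1}{r^2}\right),
\]
since every other term in $n_{harm}$ involves $\partial_j(1/r)$ or $\partial_k\partial_\ell(1/r)$ and is therefore $\mathcal O(r^{-2})$, while each term of $n_{corr}$ is at worst $\mathcal O(r^{-2})$ as well. Substituting into the axisymmetry identity $n(Rx)=Rn(x)$ for an arbitrary rotation $R$ of axis $n_0$, and using $|Rx|=r$ on the left and $Rn_0=n_0$ on the right, the coefficients of $1/r$ must agree. The comparison is clean because $v_0/r$ is the unique contribution at order $1/r$ and is direction-independent, so I obtain $v_0 = R v_0$ for every such $R$. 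Combined with $v_0\perp n_0$ from Theorem~\ref{t:expansion}, this forces $v_0=0$, since the only vector orthogonal to the axis of rotation and fixed by every rotation about it is the zero vector.

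Next I would inspect
\[
n_{corr} = -\frac{|v_0|^2}{r^2} n_0 - \frac{|v_0|^2}{6 r^3} v_0 - \frac{1}{3r} \sum_{j=1}^3 (v_0\cdot p_j)\,\partial_j\!\left(\tfrac{1}{r}\right) n_0
\]
and note that each summand carries a factor of $v_0$. Hence $v_0=0$ implies $n_{corr}\equiv 0$, and \eqref{eq:expansion} collapses to $n = n_0 + n_{harm} + \mathcal O(r^{-4})$. Finally, componentwise $n_{harm}$ is a linear combination of $1/r$ and its first and second partial derivatives, each of which is harmonic on $\R^3\setminus\{0\}$, so $\Delta n_{harm}=0$.

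I do not foresee a genuine obstacle. The demanding analytic content—existence of the expansion, the remainder estimate, and the orthogonality $v_0\perp n_0$—is already packaged in Theorem~\ref{t:expansion}. The corollary just matches one asymptotic coefficient under a symmetry constraint and then reads off the conclusion from the explicit formula for $n_{corr}$. The only mild care needed is to justify equating the $1/r$ coefficients, but since successive terms in $n_{harm}+n_{corr}$ decay strictly faster than $1/r$ and the asymptotic equality carries a quantified remainder, this is immediate.
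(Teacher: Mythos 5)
Your proof is correct and follows precisely the argument sketched in the paragraph preceding the corollary in the paper: substitute the expansion into the axisymmetry identity to obtain $Rv_0=v_0$ for all rotations $R$ about $n_0$, combine with $v_0\perp n_0$ to force $v_0=0$, and then observe that every term of $n_{corr}$ carries a factor of $v_0$, so $n_{corr}\equiv 0$. No differences in approach worth noting.
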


	Corollary~\ref{c:symexp} is stated here for minimizing maps  that are axisymmetric, but it is hard in general to prove that a minimizing map is symmetric. However, the proof of Theorem~\ref{t:expansion} can be reproduced for an axisymmetric map which is   minimizing merely   among axisymmetric configurations (see Remark~\ref{r:expansionsym}), and Corollary~\ref{c:symexp} is valid also in that case.

	\subsection{Characterization of the leading-order term}\label{ss:introtorque}

	Next we take into account the presence of the particle, a smooth bounded open subset $G\subset\R^3$, and consider the energy
	\begin{align*}
		E(n)=\int_{\R^3\setminus G}|\nabla n|^2 + F_s(n_{\lfloor\partial G}),
	\end{align*}
	where 
	\begin{align}\label{eq:Fs}
		F_s \colon H^{1/2}(\partial G;\mathbb S^2)\to [0,\infty]
		\text{ is weakly lower semicontinuous and }\lbrace F_s   <\infty\rbrace\neq \emptyset.
	\end{align}
	This ensures that, for any $n_0\in\mathbb S^2$, 
	
	the energy $E$ admits a minimizer among maps $n\colon\R^3\setminus G \to\mathbb S^2$ such that 
	\begin{align*}
		\int_{\R^3\setminus G}\frac{|n-n_0|^2}{1+r^2} +\int_{\R^3\setminus G}|\nabla n|^2<\infty.
	\end{align*}
	To check this,
	note first that a boundary map $n_b\in  H^{1/2}(\partial G;\mathbb S^2)$ with finite surface energy $F_s(n_b)<\infty$ can be extended to a map $n\in H^1_{loc}(\R^3\setminus G;\mathbb S^2)$ such that $n\equiv n_0$ outside of a compact set using e.g. \cite[Lemma~A.1]{hkl88}, so the infimum is finite. Moreover the energy is coercive thanks to Hardy's inequality, and weakly lower semicontinuous as a sum of two weakly lower semicontinuous functions.
	Therefore we may define
	\begin{align}\label{eq:hatEn0}
		\hat E(n_0)=\min\Big\lbrace E(n)\colon &n\in H^1_{loc}(\R^3\setminus G;\mathbb S^2),
		\nonumber\\
		& \int_{\R^3\setminus G}\frac{|n-n_0|^2}{1+r^2} +\int_{\R^3\setminus G}|\nabla n|^2<\infty\Big\rbrace.
	\end{align}
	
	%To check the existence of a minimizer in \eqref{eq:hatEn0}, note that a boundary map $n_b\in  H^{1/2}(\partial G;\mathbb S^2)$ with finite surface energy $F_s(n_b)<\infty$ can be extended to a map $n\in H^1_{loc}(\R^3\setminus G;\mathbb S^2)$ such that $n\equiv n_0$ outside of a compact set using e.g. \cite[Lemma~A.1]{hkl88} so the infimum is finite. Moreover the energy is coercive thanks to Hardy's inequality, and weakly lower semicontinuous as a sum of two weakly lower semicontinuous functions.

	Examples of admissible surface energies $F_s$ include
	\begin{align*}
		F_s(n)=\begin{cases}
			0 & \text{ if }n=n_D,\\
			+\infty & \text{ otherwise,}
		\end{cases}
	\end{align*}
	for some fixed map $n_D\in H^{1/2}(\partial G;\mathbb S^2)$, which corresponds to imposing Dirichlet boundary conditions $n =n_D$ on $\partial G$; or
	\begin{align*}
		F_s(n)=\int_{\partial G} g(n,x)\, d\mathcal H^2(x),
	\end{align*}
	for some measurable function $g\colon \mathbb S^2 \times \partial G\to [0,\infty)$ which is continuous with respect to $n$; for instance $g(n,x)=|n-n_D(x)|^2$ which relaxes Dirichlet boundary conditions (strong anchoring) to  weak anchoring.

	Our second main result relates the vector $v_0$ appearing in the leading-order term of the expansion \eqref{eq:expansion}  to the gradient of the function $\hat E$ at $n_0$.

	\begin{thm}\label{t:torque}
		Let $F_s\colon H^{1/2}(\partial G;\mathbb S^2)\to [0,\infty]$ satisfy \eqref{eq:Fs}.
		Then the function $\hat E$ defined by \eqref{eq:hatEn0} is Lipschitz, and for a.e. $n_0\in\mathbb S^2$ we have
		\begin{align}\label{eq:torque}
			\nabla \hat E(n_0)=-8\pi v_0,
		\end{align}
		where $v_0=\lim_{r\to\infty} r(n-n_0)$ for any minimizing $n$ such that $\hat E(n_0)=E(n)$. 
		Moreover $\hat E$ is 
		{
			semiconcave: for all
		}
		$n_0,m_0\in\mathbb S^2$ and $v_0=\lim_{r\to\infty} r(n-n_0)$ for any minimizer $n$ achieving $\hat E(n_0)$, we have the one-sided inequality
		\begin{align*}
			\hat E(m_0)\leq \hat E(n_0)-8\pi v_0\cdot (m_0-n_0) + C |m_0-n_0|^2,
		\end{align*}
		for some constant $C=C(G,F_s)\geq 0$.
	\end{thm}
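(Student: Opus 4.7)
The plan is to prove the semiconcavity inequality directly by constructing an explicit competitor for $\hat E(m_0)$, and to deduce the Lipschitz property and the a.e.\ gradient formula as corollaries. Let $n$ achieve $\hat E(n_0)$, and for $m_0$ close to $n_0$ pick a skew-symmetric $W$ with $\|W\|\lesssim 1$ and $\alpha=O(|m_0-n_0|)$ such that $Q_\alpha:=\exp(\alpha W)$ satisfies $Q_\alpha n_0=m_0$. Fix a smooth cutoff $\psi$ that vanishes near $\overline{G}$ and equals $1$ outside a bounded set, and set
\[
\tilde n(x):=Q_{\alpha\psi(x)}\,n(x).
\]
Then $\tilde n$ is $\mathbb S^2$-valued, agrees with $n$ on $\partial G$ (so the surface energies coincide), and tends to $Q_\alpha n_0=m_0$ at infinity, which makes it admissible for $\hat E(m_0)$.

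\textbf{Energy identity.} A direct expansion gives
\[
|\nabla\tilde n|^2-|\nabla n|^2=2\alpha\,\langle WQ_{\alpha\psi}n,\,Q_{\alpha\psi}\nabla n\rangle\cdot\nabla\psi+\alpha^2|WQ_{\alpha\psi}n|^2|\nabla\psi|^2,
\]
whose quadratic term contributes $O(\alpha^2)$. The linear term equals $2\alpha\int\langle Wn,\nabla n\rangle\cdot\nabla\psi+O(\alpha^2)$ after replacing $Q_{\alpha\psi}$ by $I$. The key step is the $\psi$-independent identity
\[
\int_{\R^3\setminus G}\langle Wn,\nabla n\rangle\cdot\nabla\psi=-4\pi\,(Wn_0)\cdot v_0,
\]
proved by integration by parts: the bulk divergence $\partial_i\langle Wn,\partial_in\rangle=\langle W\partial_in,\partial_in\rangle+\langle Wn,\Delta n\rangle$ vanishes pointwise, by skew-symmetry of $W$ combined with the harmonic-map equation $\Delta n=-|\nabla n|^2n$; the $\partial G$ flux vanishes since $\psi\equiv0$ there; and the flux at infinity is computed from Theorem~\ref{t:expansion} using $\partial_rn=-v_0/r^2+O(r^{-3})$ and $Wn=Wn_0+O(r^{-1})$, producing $-4\pi(Wn_0)\cdot v_0$.

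\textbf{Assembly and consequences.} Combining the above with $\alpha Wn_0=m_0-n_0+O(\alpha^2)$ gives
\[
\hat E(m_0)\leq E(\tilde n)=\hat E(n_0)-8\pi v_0\cdot(m_0-n_0)+O(|m_0-n_0|^2),
\]
which is the claimed one-sided bound for $|m_0-n_0|$ small; it extends to all $m_0\in\mathbb S^2$ thanks to a uniform upper bound $\hat E\leq C$ on $\mathbb S^2$ (constructed as in the existence proof following \eqref{eq:hatEn0}) and a uniform bound $|v_0|\leq C$ obtained by combining Hardy's inequality with the far-field expansion. Applying the semiconcavity inequality symmetrically yields the two-sided Lipschitz estimate. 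Rademacher's theorem on $\mathbb S^2$ gives differentiability a.e.; at each such $n_0$ the one-sided inequality forces $\nabla\hat E(n_0)\cdot w=-8\pi v_0\cdot w$ for every $w\in T_{n_0}\mathbb S^2$, and since $v_0\perp n_0$ already lies in the tangent plane by Theorem~\ref{t:expansion}, one concludes $\nabla\hat E(n_0)=-8\pi v_0$.

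\textbf{Main obstacle.} The delicate point is quantitative control of the remainder $O(\alpha)$ produced by replacing $Q_{\alpha\psi}$ with the identity inside the leading integral, uniformly over the (non-unique) choice of minimizer $n$ as $n_0$ varies. This rests on a priori energy bounds uniform in $n_0$, which follow from the uniform upper bound on $\hat E$ and the structure of the far-field expansion.
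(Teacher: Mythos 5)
Your competitor $\tilde n = Q_{\alpha\psi}n$ and its energy expansion are essentially the paper's Step~2 in different bookkeeping: the paper writes $\tilde n = R(\chi\theta)n$, expands $\int|\nabla\tilde n|^2-|\nabla n|^2$ by polarization and integrates by parts against $\Delta n$, which produces exactly the boundary flux $\int_{\partial B_R}\partial_r n\cdot(\tilde n-n)\to -4\pi v_0\cdot(m_0-n_0)$. Your version isolates the same flux as the $\psi$-independent identity $\int\langle Wn,\nabla n\rangle\cdot\nabla\psi=-4\pi(Wn_0)\cdot v_0$, after checking that the field $\langle Wn,\nabla n\rangle$ is divergence-free by skew-symmetry and the harmonic map equation. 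This is a clean reformulation and is correct; the two computations are equivalent.

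Where you genuinely diverge from the paper is the logical order. The paper proves Lipschitz first (Step~1) by a cruder competitor estimate $\hat E(n_1^\infty)\leq C(1+\lambda^{-1})|n_1^\infty-n_2^\infty|^2+(1+\lambda)\hat E(n_2^\infty)$ with a judicious $\lambda$, then obtains \eqref{e.locsemconc} for any $n_0$ and reads off both the a.e.\ gradient formula and semiconcavity. You instead aim to prove the semiconcavity inequality first and derive Lipschitz from it. That ordering can be made to work, but the missing ingredient in your writeup is the uniform bound $|v_0|\leq C(G,F_s)$, and the justification you offer for it (``combining Hardy's inequality with the far-field expansion'') does not go through: Theorem~\ref{t:expansion} furnishes the expansion with constants $C(n,\sigma)$ and a threshold $R_0(n)$ that depend on the particular minimizer $n$, so neither Hardy's inequality nor the pointwise expansion gives an $n_0$-uniform bound on $|v_0|$ directly. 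The correct route within your scheme is to note that your construction already yields the one-sided inequality $\hat E(m_0)\leq\hat E(n_0)-8\pi v_0\cdot(m_0-n_0)+C|m_0-n_0|^2$ for \emph{all} $m_0\in\mathbb S^2$ (no smallness of $|m_0-n_0|$ is needed, since $\alpha$ is comparable to $|m_0-n_0|$ uniformly on the sphere and every estimate is uniform in $\alpha\in[0,\pi]$ once $\hat E$ is bounded); then taking $m_0$ to be the unit vector in the direction of $v_0$ (so that $v_0\cdot(m_0-n_0)=|v_0|$) gives $8\pi|v_0|\leq\operatorname{osc}_{\mathbb S^2}\hat E+4C$, after which the two-sided Lipschitz estimate follows exactly as you intend. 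In short: replace the appeal to Hardy and the far-field expansion by a short argument that extracts the uniform $|v_0|$ bound from the semiconcavity inequality and the $n_0$-uniform upper bound on $\hat E$, and your proof closes.
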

	
	\begin{rem}\label{r:BdG}
		Formula \eqref{eq:torque} relates $v_0$ to the torque applied by the particle $G$ on the nematic, in agreement with
		formal calculations in \cite{brocharddegennes70} for an axisymmetric particle.
		These formal calculations 
		can be made rigorous (and then they show that $\hat E$ is differentiable everywhere)
		if one knows that the minimization problem \eqref{eq:hatEn0} admits a unique minimizer $n$ which moreover depends smoothly on $n_0$.  
		Such uniqueness and smoothness results seem very hard to obtain in general, 
		and we use a somewhat different method to prove \eqref{eq:torque} and Theorem~\ref{t:torque}.
	\end{rem}

	Different minimizers $n$ in \eqref{eq:hatEn0}  may a priori have different asymptotic expansions \eqref{eq:expansion}. However, a crucial nontrivial consequence of Theorem~\ref{t:torque} is that at any differentiability point $n_0$ of $\hat E$, the coefficient $v_0$ of the leading-order term is uniquely determined by $n_0$, even though \eqref{eq:hatEn0} may have several minimizers. 
	We do not know whether $\hat E$ can have non-differentiable points, and whether $v_0$ can be multivalued at such points.
	The semiconcavity inequality in Theorem~\ref{t:torque} implies that all possible values of $v_0$ are included in the subdifferential of $-\frac{1}{8\pi}\hat E$. 
	It would be interesting to characterize values of $v_0$ in terms of this subdifferential.

	One may pose an analogous question for $\mathbb{S}^1$-valued minimizers in exterior domains $\R^2\setminus G$ in the plane which approach a constant $n_0=e^{i\phi_0}$ at infinity.  However the situation is completely different, 
	because finite-energy configurations don't exist in general. One way around that issue is to relax the $\mathbb S^1$-valued constraint via a Ginzburg-Landau approximation. This approach is implemented in \cite{ABGS}, with the asymptotic value $n_0=e^{i\phi_0}$ left free.
	
	An interesting consequence of the semiconcavity of $\hat E$ is that it must be differentiable, of zero gradient, at any local minimum point.
	
	\begin{cor}\label{c:eqn0}
		If $n_0\in\mathbb S^2$ is locally minimizing for $\hat E$, then $v_0=0$ and $n=n_{harm}+\mathcal O(1/r^4)$ as $r=|x|\to\infty$ with $\Delta n_{harm}=0$,
		for any minimizing $n$ such that $E(n)=\hat E(n_0)$.
	\end{cor}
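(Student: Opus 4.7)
The plan is to combine the semiconcavity inequality of Theorem~\ref{t:torque} with the local minimality of $n_0$ to force the linear term on the right-hand side of the semiconcavity estimate to vanish in every direction tangent to $\mathbb{S}^2$ at $n_0$. This tangential information, combined with the orthogonality $v_0 \cdot n_0 = 0$ from Theorem~\ref{t:expansion}, pins $v_0$ down to zero. The conclusion about $n_{corr}$ is then immediate from the explicit formula in Theorem~\ref{t:expansion}.

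More precisely, fix any tangent vector $w \in T_{n_0}\mathbb{S}^2$, i.e.\ $w \perp n_0$, and consider the curve $\gamma(t) = (n_0 + t w)/|n_0 + t w| \in \mathbb{S}^2$ for small $t \in \mathbb{R}$, which satisfies $\gamma(t) - n_0 = tw + O(t^2)$ and $|\gamma(t) - n_0|^2 = O(t^2)$. Since $n_0$ is locally minimizing for $\hat{E}$ on $\mathbb{S}^2$, we have $\hat{E}(\gamma(t)) \geq \hat{E}(n_0)$ for $|t|$ small. Applying the semiconcavity inequality from Theorem~\ref{t:torque} with $m_0 = \gamma(t)$ gives
\begin{align*}
0 \;\leq\; \hat{E}(\gamma(t)) - \hat{E}(n_0) \;\leq\; -8\pi\, v_0 \cdot (\gamma(t) - n_0) + C |\gamma(t) - n_0|^2 \;=\; -8\pi\, t\, v_0 \cdot w + O(t^2).
\end{align*}
Dividing by $t$ and letting $t \to 0^+$, then $t \to 0^-$, yields $v_0 \cdot w = 0$. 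Since $w$ was arbitrary in $T_{n_0}\mathbb{S}^2$, the tangential component of $v_0$ vanishes, and together with $v_0 \perp n_0$ this gives $v_0 = 0$.

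With $v_0 = 0$, the explicit expression for the non-harmonic correction in Theorem~\ref{t:expansion},
\begin{align*}
n_{corr} = -\frac{|v_0|^2}{r^2} n_0 - \frac{|v_0|^2}{6 r^3} v_0 - \frac{1}{3r}\sum_{j=1}^3 v_0 \cdot p_j\, \partial_j(1/r)\, n_0,
\end{align*}
vanishes identically. The expansion \eqref{eq:expansion} then reduces to $n = n_0 + n_{harm} + \mathcal{O}(1/r^4)$, and since the constant $n_0$ is itself harmonic it may be absorbed into $n_{harm}$, which proves the claimed expansion. I do not foresee a serious obstacle here; the only minor care needed is to carry out variations that stay on $\mathbb{S}^2$, which is precisely what the normalization $\gamma(t) = (n_0 + tw)/|n_0 + tw|$ ensures.
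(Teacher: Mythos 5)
Your proof is correct and takes essentially the same route as the paper: the paper merely notes that a semiconcave function has zero (super)gradient at a local minimum, which forces $v_0=0$, and your two-sided limit argument is a clean, explicit way to prove exactly that fact directly from the semiconcavity inequality of Theorem~\ref{t:torque}. One small virtue of your version worth noting is that it avoids any appeal to the ``a.e.\ differentiability'' clause of Theorem~\ref{t:torque} (which would not a priori apply at a fixed $n_0$) and works directly with the everywhere-valid semiconcavity estimate, which is indeed what the paper intends.
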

	
	\begin{rem}\label{r:eqn0}
		In the physical system it is formally equivalent to rotate the far-field alignment $n_0$ or the  particle $G$. Hence Corollary~\ref{c:eqn0} tells us that, 
		when the particle is in a stable equilibrium position, 
		all minimizing configurations $n$ have a far-field expansion which is harmonic up to $\mathcal O(1/r^4)$, and whose leading order is given by the harmonic term $\sum_j p_j\partial_j(1/r)$ for some vectors $p_j\in n_0^\perp$. Such leading-order term corresponds to solutions of the equation
		\begin{align*}
			\Delta n =\frac{1}{4\pi}\sum_{j=1}^3 p_j\partial_j \delta\qquad\text{in }\R^3,
		\end{align*}
		where the singular source term can be interpreted as a dipole-moment, as described e.g. in \cite{lubensky98}.
	\end{rem}
	
	Another remarkable consequence of Theorem~\ref{t:torque} 
	concerns the important case where the particle $G$, 
	together with its anchoring properties described by the surface energy $F_s$,
	possess some rotational symmetry. 
	As mentioned earlier, we may not necessarily infer the same symmetry for all minimizers, 
	but we can make some strong geometrical conclusions concerning  the vector $v_0$ in the expansion \eqref{eq:expansion} of minimizers.
	To make this precise, we define the symmetry group of the particle (and its anchoring properties) $(G,F_s)$ as a subgroup of the orthogonal transformations $O(3)$ given by
	\begin{align*}
		\mathrm{Sym}(G, F_s)=\Big\lbrace R\in O(3)\colon & RG=G,\text{ and } \\
		& F_s(Rn\circ R^{-1})=F_s(n)\;\forall n\in H^{1/2}(\partial G;\mathbb S^2)\Big\rbrace.
	\end{align*}
	For any symmetry-preserving transformation $R\in\mathrm{Sym}(G, F_s)$, the energy $E$ is conserved under the transformation $n\mapsto Rn\circ R^{-1}$, and therefore $\hat E(n_0)=\hat E(Rn_0)$.
	
	\begin{cor}\label{c:sym}
		If the particle 
		%$G$ 
		has an axis of symmetry $\mathbf u\in\mathbb S^2$, i.e. $\mathrm{Sym}(G, F_s)$ contains all rotations $R\in SO(3)^{\mathbf u}$ about axis $\mathbf u$, then for almost all $n_0\in\mathbb S^2$ we have
		\begin{align}\label{eq:cor1.9}
			v_0(n_0)\cdot (\mathbf u\times n_0)=0,
		\end{align} 
		where $v_0(n_0)=\lim_{r\to\infty}r(n-n_0)$ for any minimizing map $n$ achieving $\hat E(n_0)$. If $\hat E$ is differentiable at $\mathbf u$ then $v_0(\mathbf u)=0$.
		
		If the particle is spherically symmetric, i.e. $\mathrm{Sym}(G, F_s)$ contains  all rotations $SO(3)$, then $v_0(n_0)=0$ for all $n_0\in\mathbb S^2$.
	\end{cor}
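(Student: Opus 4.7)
The plan is to translate the symmetry assumption on $G$ into an invariance of the minimal energy function $\hat E$ on the target sphere, and then combine this invariance with the identification $\nabla\hat E(n_0)=-8\pi v_0(n_0)$ from Theorem~\ref{t:torque}. As noted just before the statement of the corollary, every $R\in\mathrm{Sym}(G)$ acts on configurations by $n\mapsto Rn\circ R^{-1}$ in an energy-preserving way, so that $\hat E(Rn_0)=\hat E(n_0)$ for all $n_0\in\mathbb S^2$.

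First I would handle the axial case at a generic $n_0$: at any point where $\hat E$ is differentiable (a.e.\ by Theorem~\ref{t:torque}), differentiating $\hat E(R_\theta n_0)=\hat E(n_0)$ in $\theta$ at $\theta=0$, with $R_\theta\in SO(3)^{\mathbf u}$ the rotation by angle $\theta$ about $\mathbf u$, yields $\nabla\hat E(n_0)\cdot(\mathbf u\times n_0)=0$; combining with the torque formula of Theorem~\ref{t:torque} gives \eqref{eq:cor1.9}. The point $n_0=\mathbf u$ requires a separate argument since $\mathbf u\times\mathbf u=0$ trivializes the above identity. Instead, the chain rule applied to $\hat E\circ R_\theta=\hat E$ at $\mathbf u$ forces $R_\theta^T\nabla\hat E(\mathbf u)=\nabla\hat E(\mathbf u)$ for every $\theta$; since no nonzero vector in $\mathbf u^\perp$ is fixed by all rotations about $\mathbf u$, this gives $\nabla\hat E(\mathbf u)=0$ and hence $v_0(\mathbf u)=0$.

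For the spherically symmetric case, invariance under the full $SO(3)$ forces $\hat E$ to be constant on $\mathbb S^2$. Almost-everywhere differentiability alone only yields $v_0(n_0)=0$ a.e., whereas the corollary asks for the pointwise statement. To upgrade this, I would feed the constant $\hat E$ into the semiconcavity bound from Theorem~\ref{t:torque}: fix $n_0\in\mathbb S^2$ and a unit $e\in n_0^\perp$, and set $m_0=\cos\theta\,n_0+\sin\theta\,e$. Using $\hat E(m_0)=\hat E(n_0)$, the orthogonality $v_0\perp n_0$ from Theorem~\ref{t:expansion}, and $|m_0-n_0|^2=2(1-\cos\theta)$, the semiconcavity inequality reduces to $8\pi\sin\theta\,(v_0\cdot e)\leq 2C(1-\cos\theta)$. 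Dividing by $|\theta|$ and letting $\theta\to 0$ with both signs yields $v_0\cdot e=0$, and since $e\in n_0^\perp$ was arbitrary we conclude $v_0=0$ at the arbitrary point $n_0$.

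The main obstacle is precisely the spherical case at \emph{every} $n_0$, not merely almost every: the argument must use the one-sided semiconcavity inequality rather than a.e.\ differentiability, and one must exploit that the tangential component of $m_0-n_0$ is $O(\theta)$ while its normal component is only $O(\theta^2)$, so that the first-order term dominates in the limit $\theta\to 0$. The remaining pieces---orbit differentiation for generic $n_0$ and the fixed-vector argument at $n_0=\mathbf u$ in the axial case---are direct consequences of $SO(3)^{\mathbf u}$-equivariance once the torque formula of Theorem~\ref{t:torque} is in hand.
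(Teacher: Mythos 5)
Your proof is correct, and the first two parts (the axial case at a generic differentiability point $n_0$, and at $n_0=\mathbf u$) coincide essentially verbatim with the paper's argument: differentiate the orbit invariance $\hat E(R_\theta n_0)=\hat E(n_0)$ and invoke the torque formula, then at $\mathbf u$ use that no nonzero vector in $\mathbf u^\perp$ is fixed by every rotation about $\mathbf u$.

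For the spherical case you take a slightly different route, and it is worth noting why. The paper argues tersely: $\hat E$ constant implies $\nabla\hat E\equiv 0$ everywhere, so $v_0\equiv 0$ everywhere. Strictly speaking this uses the fact, established in Step~2 of the proof of Theorem~\ref{t:torque} but not visible in its statement, that the torque identity $\nabla\hat E(n_0)=-8\pi v_0$ holds at \emph{every} differentiability point of $\hat E$ (not merely a.e.); since a constant function is differentiable everywhere, the conclusion follows at every point. You instead treat Theorem~\ref{t:torque} as a black box and observe that the ``a.e.'' in its statement does not immediately give the pointwise claim; you then upgrade it via the one-sided semiconcavity inequality, which \emph{is} stated for all $n_0,m_0$. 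Your tangential-perturbation computation $m_0=\cos\theta\,n_0+\sin\theta\,e$ is correct: since $v_0\perp n_0$, the linear term in the semiconcavity bound is $-8\pi\sin\theta\,(v_0\cdot e)$ while $|m_0-n_0|^2=2(1-\cos\theta)=O(\theta^2)$, so the two-sided limit $\theta\to 0^\pm$ forces $v_0\cdot e=0$ for every unit $e\in n_0^\perp$, hence $v_0=0$. Your version has the minor advantage of depending only on the stated conclusions of Theorem~\ref{t:torque}; the paper's is shorter but silently relies on the interior of that theorem's proof.
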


	Note that since $v_0$ is orthogonal to $n_0$, if $\mathbf u$ and $n_0$ are not parallel, then the identity $v_0 \cdot (\mathbf u\times n_0)=0$ forces $v_0$ to belong to a fixed line determined by $n_0$ and $\mathbf u$.
	This link between symmetry properties of $G$ and of $v_0$ gives a rigorous justification to assertions in \cite[\S~II.1.a]{brocharddegennes70} where this is deduced from the assumption, false in general, that minimizers $n$ in \eqref{eq:hatEn0} are unique.

	\begin{rem}\label{r:axisym}
		In the axisymmetric setting,
		Corollary~\ref{c:sym} leaves open the case when $\hat E$ is not differentiable at $n_0=\mathbf u$, the axis of symmetry:  the $1/r$ asymptotic  might be nonzero.
		If that situation occurs, that is,
		there is a minimizer $n$ with far-field alignment $\mathbf u$ but with $v_0\neq 0$, then all its axial rotations $R n\circ R^{-1}$ are minimizers for $\hat E(\mathbf u)$ too, with $1/r$ asymptotic term equal to $Rv_0$. The semiconcavity inequality
		\begin{align*}
			\hat E(n_0)\leq \hat E(\mathbf u)-8\pi Rv_0 \cdot (n_0-\mathbf u) + C |n_0-\mathbf u|^2,
		\end{align*}
		is then valid for all rotations $R$ of axis $\mathbf u$, and we deduce
		\begin{align*}
			\hat E(n_0)\leq \hat E(\mathbf u) -8\pi|n_0-\mathbf u| +C |n_0-\mathbf u|^2.
		\end{align*}
		Hence $\hat E$ has a local maximum at $\mathbf u$, and its graph near $\mathbf u$ looks locally like a cone.  While none of the results above preclude this scenario in the axisymmetric setting, it is natural to ask the open question: 
		can this situation really occur? 
	\end{rem}

	\subsection{Plan of the article}
	In section~\ref{s:expansion} we prove Theorem~\ref{t:expansion} and in section~\ref{s:torque} we prove Theorem~\ref{t:torque}.   In  Appendix~\ref{a:decay} we provide proofs of some familiar (but not easily found) decay estimates for Poisson's equation for the reader's convenience.

	\section{Far-field expansion}\label{s:expansion}
	In this section we prove Theorem~\ref{t:expansion}. The minimizing map $n$ solves, in the weak sense, the harmonic map equation
	\begin{align}\label{eq:eulerlagrange}
		-\Delta n=|\nabla n|^2n\qquad\text{in }\R^3\setminus \overline B_1.
	\end{align}
	If the right-hand side decays like $\mathcal O(1/|x|^\gamma)$ for some $\gamma>3$,   decay estimates for the Poisson equation (see Lemma~\ref{l:decaycorrecptwise}) enable one to start a harmonic expansion for $n$, and this process can then be iterated including relevant non-harmonic corrections. 
	Hence the main new ingredient in the proof of Theorem~\ref{t:expansion} is to obtain an initial strong enough decay estimate on $|\nabla n|$.

	Note that, since $\int_{|x|\geq R}|\nabla n|^2\to 0$ as $R\to\infty$, 
	small energy estimates for harmonic maps \cite{schoen84,schoenuhlenbeck82} ensure that $n$ is smooth outside of a finite ball of large enough radius. Specifically, given $x_0\in\R^3$, $\vert x_0\vert =R$, the small energy regularity estimate for harmonic maps \cite[Theorem~2.2]{schoen84} applied to $\hat n(\hat x)=n (x_0+(R/2)\hat x)$
	implies the existence of $R_0\geq 1$ (depending on $n$)  such that
	\begin{equation}\label{eq:smallregrescaled}
		\abs{x_0}=R\geq R_0\quad\Longrightarrow \quad \abs{\nabla n}^2(x_0)\lesssim R^{-3}\int_{\frac R2\leq\abs{x}\leq \frac {3R}{2}}\abs{\nabla n}^2.
	\end{equation}
	In particular we have the decay estimate $|\nabla n(x)|^2=  o(1/|x|^3)$.  
	At this point we would like to use decay estimates of Poisson's equation from Lemma~\ref{l:decaycorrecptwise} in an iterative process to generate the far-field expansion, but the decay given in \eqref{eq:smallregrescaled} is just not enough to start applying the Lemma.  Consequently, we require an algebraic decay
	$\mathcal O(1/R^\delta)$, for some $\delta>0$,
	of the integral $\int_{|x|\geq R}|\nabla n|^2$.  
	This we obtain in  
	Lemma~\ref{l:energyimprovement} and Step~1 of Theorem~\ref{t:expansion}'s proof, 
	using the minimizing property of $n$ in order to compare the decay of that integral with the decay of the same integral for minimizers of the Dirichlet energy with values into the plane $T_{n_0}\mathbb S^2$, that is, solutions of the linearized equation $\Delta n=0$.
	
	First recall that for harmonic functions we have the following decay estimates.
	\begin{lem}\label{l:harmonicdecay}
		Let $u\colon \R^3\setminus \overline B_1\to\R$ satisfy $\int_{\R^3\setminus \overline B_1}\abs{\nabla u}^2<\infty$ and $\Delta u=0$ in $\R^3\setminus \overline B_1$. Then for all $R\geq 1$, $\hat u(\hat x)=u(R \hat x)$ satisfies
		\begin{equation*}
			\int_{\abs{\hat x}\geq 1}\abs{\nabla\hat u}^2 = \frac 1R \int_{\abs{x}\geq R}\abs{\nabla u}^2 \leq \frac{1}{R^2}\int_{\abs{x}\geq 1}\abs{\nabla u}^2.
		\end{equation*}
	\end{lem}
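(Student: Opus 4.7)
The plan is to prove the lemma in two independent pieces. The equality $\int_{|\hat x|\geq 1}|\nabla \hat u|^2 = R^{-1}\int_{|x|\geq R}|\nabla u|^2$ is simply a change of variables: by the chain rule $\nabla \hat u(\hat x) = R(\nabla u)(R\hat x)$, so $|\nabla \hat u(\hat x)|^2 = R^2|\nabla u(R\hat x)|^2$, and the substitution $x = R\hat x$ (Jacobian $R^3$, new region $\{|x|\geq R\}$) gives the identity at once.

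For the decay estimate, I would use the spherical harmonic expansion of $u$. Since $u$ is harmonic on the exterior, separation of variables gives
\begin{equation*}
u(r,\omega) = \sum_{\ell\geq 0,\,|m|\leq \ell} \bigl(a_{\ell m} r^\ell + b_{\ell m} r^{-\ell-1}\bigr) Y_{\ell m}(\omega),
\end{equation*}
and the finite-Dirichlet-energy hypothesis forces $a_{\ell m} = 0$ for every $\ell\geq 1$ (any such growing mode would contribute an infinite Dirichlet integral near infinity, by orthogonality of the $Y_{\ell m}$ on each sphere), leaving $u = c_0 + \sum_{\ell\geq 1,\,m} b_{\ell m} r^{-\ell-1} Y_{\ell m}(\omega)$. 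Using orthonormality of $\{Y_{\ell m}\}$ in $L^2(\mathbb S^2)$ together with the eigenvalue identity $\int_{\mathbb S^2}|\nabla_\omega Y_{\ell m}|^2\,d\omega = \ell(\ell+1)$, a term-by-term computation with $|\nabla u|^2 = (\partial_r u)^2 + r^{-2}|\nabla_\omega u|^2$ followed by integration in $r$ over $[R,\infty)$ yields
\begin{equation*}
\int_{|x|\geq R}|\nabla u|^2 = \sum_{\ell\geq 1,\,m} (\ell+1)\, b_{\ell m}^2\, R^{-(2\ell+1)}.
\end{equation*}
Since $\ell\geq 1$ and $R\geq 1$, every exponent satisfies $R^{-(2\ell+1)}\leq R^{-1}$, so the sum is bounded by $R^{-1}$ times the same sum at $R=1$, i.e.\ by $R^{-1}\int_{|x|\geq 1}|\nabla u|^2$; combined with the scaling identity above this is precisely the second inequality in the lemma.

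There is no serious obstacle: the only points needing care are the justification of the spherical harmonic expansion and the exclusion of the growing modes, both standard consequences of separation of variables on $\R^3\setminus\overline B_1$. It is worth noting that the argument actually yields the stronger decay $\int_{|x|\geq R}|\nabla u|^2\leq R^{-3}\int_{|x|\geq 1}|\nabla u|^2$, reflecting the fact that, once the constant $c_0$ is subtracted, the slowest-decaying multipole is the dipole $r^{-2}$; the lemma as stated keeps the weaker $R^{-1}$ rate presumably because this is all that is needed in the iterative scheme of Section~\ref{s:expansion}.
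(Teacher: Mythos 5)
Your route is essentially the paper's: both arguments expand $u$ in spherical harmonics, discard the growing modes using the finite Dirichlet energy hypothesis, and read the decay off the remaining negative powers of $r$. (The paper reaches the same sum a little more slickly, via the divergence theorem identity $\int_{|x|\geq R}|\nabla u|^2 = -\int_{|x|=R}u\,\partial_r u$, rather than integrating $|\nabla u|^2$ term by term, but this is a cosmetic difference.) However, there is a genuine slip in your expansion. After eliminating the growing modes $a_{\ell m}r^{\ell}$ for $\ell\geq 1$ you write the remainder as $u = c_0 + \sum_{\ell\geq 1,m}b_{\ell m}r^{-\ell-1}Y_{\ell m}$, omitting the decaying $\ell=0$ mode $b_{00}\,r^{-1}$. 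That mode is \emph{not} excluded by finite Dirichlet energy: $\nabla(1/r)$ has $|\nabla(1/r)|^2 = r^{-4}$ and $\int_1^\infty r^{-4}\,r^2\,dr<\infty$, so it belongs in the expansion. Its contribution to $\int_{|x|\geq R}|\nabla u|^2$ is $b_{00}^2 R^{-1}$, i.e.\ exactly the $R^{-1}$ rate, so your closing claim that the argument ``actually yields the stronger decay $R^{-3}$'' is false; the $R^{-1}$ rate of the lemma is sharp, witnessed by $u = 1/r$. The lemma as stated survives your computation once the monopole is restored, because $R^{-(2\ell+1)}\leq R^{-1}$ still holds for every $\ell\geq 0$ and $R\geq 1$, but the ``strengthening'' paragraph should be deleted.
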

	\begin{proof}
		Since $u$ is harmonic and $\int_{\R^3\setminus\overline B_1}|\nabla u|^2<\infty$, its spherical harmonics expansion is of the form
		\begin{equation*}
			u(x) = u(r\omega)=u_0 + \sum_k \frac{a_k}{r^{\gamma_k}}\phi_k(\omega),
		\end{equation*}
		where we decompose $x \neq 0$ in polar coordinates as $x = r\omega, r = |x|, $ and $\omega = \frac{x}{|x|} \in \mathbb{S}^2,$ and $\lbrace \phi_k\rbrace_k$ is an $L^2(\mathbb S^2)$-orthonormal system of spherical harmonics and $\gamma_k > 0$. Then we compute
		\begin{align*}
			\int_{\abs{x}\geq R}\abs{\nabla u}^2 &= \int_{\abs{x}\geq R}\nabla\cdot(u\nabla u) =-\int_{\abs{x}=R} u \partial_r u \\
			& = \sum_k \frac{\gamma_ka_k^2}{R^{2\gamma_k+1}} \leq \frac 1R \sum_k\gamma_ka_k^2 = \frac 1R \int_{\abs{x}\geq 1}\abs{\nabla u}^2.
		\end{align*}
	\end{proof}
	
	We obtain almost the same decay for our minimizing map $n$, via the following decay improvement result. 
	The estimate obtained in Lemma~\ref{l:energyimprovement} will be needed in Step 1 of the proof of Theorem~\ref{t:expansion}.  
	After the proof of the theorem we present a second proof of that step, replacing the estimate of Lemma~\ref{l:energyimprovement} by a different approach inspired by asymptotic expansions of minimal surfaces in \cite{schoen83}. 
	Note that, as pointed out by the anonymous referee,
	this second proof
	makes use of minimality of $n$ only for the small energy estimate, and therefore it applies also to nonminimizing stationary harmonic maps \cite{bethuel93}.
	We find it worth including both proofs here,
	as their ranges of applicability to the anisotropic energies mentioned in Remark~\ref{r:general} may differ.

	\begin{lem}\label{l:energyimprovement}
		For any $\alpha<2$, there exist $\delta>0$ and $R_1>1$ such that for any $n_0\in\mathbb S^2$ and any map $n\colon \R^3\setminus \overline B_1\to \mathbb S^2$ with
		\begin{align*}
			\int_{\R^3\setminus \overline B_1} \frac{\abs{n-n_0}^2}{r^2} \lesssim \int_{\R^3\setminus \overline B_1} \abs{\nabla n}^2<\infty,
		\end{align*}
		
		which is energy minimizing, i.e. 
		\begin{align*}
			\int_{\R^3\setminus \overline B_1}\abs{\nabla n}^2\leq\int_{\R^3\setminus \overline B_1}\abs{\nabla\tilde n}^2,
		\end{align*} 
		for all $\mathbb S^2$-valued maps $\tilde n$ that agree with $n$ outside of a compact subset of $ \R^3\setminus \overline B_1$, we have
		\begin{equation*}
			\int_{\abs{x}\geq 1}\abs{\nabla n}^2\leq\delta^2\quad\Rightarrow\quad\frac 1{R_1} \int_{\abs{x}\geq R_1}\abs{\nabla n}^2\leq \frac{1}{R_1^\alpha}\int_{\abs{x}\geq 1}\abs{\nabla n}^2.
		\end{equation*}
	\end{lem}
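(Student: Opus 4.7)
The plan is to exploit the minimality of $n$ by comparing it to the harmonic extension of the $n_0^\perp$-projection of $n-n_0$, trading the nonlinear $\mathbb{S}^2$-constraint for an $O(\e^2)$ correction. Since $\alpha<2$, the harmonic decay rate $R^{-1}$ of Lemma~\ref{l:harmonicdecay} is strictly faster than the target $R^{1-\alpha}$, leaving an $R^{2-\alpha}$ margin to absorb these corrections.

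Writing $\e^2:=\int_{|x|\geq 1}|\nabla n|^2\leq\delta^2$ and fixing $R_0=2$, the small-energy regularity \eqref{eq:smallregrescaled} (which, for $\delta$ small, holds from the universal threshold $R_0$) yields $|\nabla n(x)|\lesssim \e|x|^{-3/2}$ and hence, by integration from infinity, the uniform pointwise bound $|v(x)|\lesssim \e|x|^{-1/2}$ on $\{|x|\geq R_0\}$, where $v:=n-n_0$. From $|n|^2=1$ one obtains $v\cdot n_0=-\tfrac12|v|^2$, so the projection $w:=v+\tfrac12|v|^2 n_0\in n_0^\perp$ satisfies $|v-w|=\tfrac12|v|^2$, $|w|^2=|v|^2-\tfrac14|v|^4$, and the crucial algebraic identity $\sqrt{1-|w|^2}=1-\tfrac12|v|^2$. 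Combined with $n=(1-\tfrac12|v|^2)n_0+w$, this identity is what will make the competitor below match $n$ on the boundary automatically.

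Let $V\colon\{|x|\geq R_0\}\to n_0^\perp$ be the unique finite-energy harmonic extension of $w|_{\{|x|=R_0\}}$ with $V(x)\to 0$ at infinity; the maximum principle gives $\|V\|_\infty\leq\|w|_{\{|x|=R_0\}}\|_\infty\lesssim\e$. Define the lift $\tilde n^{(M)}:=\sqrt{1-|V^{(M)}|^2}\,n_0+V^{(M)}$ on the annulus $\{R_0\leq|x|\leq M\}$, where $V^{(M)}$ is the harmonic extension there matching $w$ on both components of $\partial(\{R_0\leq|x|\leq M\})$. The identity above forces $\tilde n^{(M)}=n$ on $\partial B_{R_0}\cup\partial B_M$ automatically, so $\tilde n^{(M)}$ is a genuine competitor. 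A direct chain-rule computation gives $|\nabla\tilde n^{(M)}|^2\leq(1+O(\e^2))|\nabla V^{(M)}|^2$, and letting $M\to\infty$---with $V^{(M)}\to V$ and convergent energies, thanks to the decay $|w-V|\lesssim\e/\sqrt{M}$ on $\{|x|=M\}$---minimality yields
\begin{equation*}
\int_{|x|\geq R_0}|\nabla n|^2\;\leq\;(1+O(\e^2))\int_{|x|\geq R_0}|\nabla V|^2.
\end{equation*}
Since $V$ is the Dirichlet minimizer with boundary datum $w|_{\{|x|=R_0\}}$ among extensions decaying at infinity, the reverse $\int|\nabla V|^2\leq\int|\nabla w|^2\leq\int|\nabla n|^2$ is automatic; combining both bounds with the orthogonality $\int\nabla V\cdot\nabla(w-V)=0$ forces $\int|\nabla(w-V)|^2=O(\e^4)$, and a similar calculation gives $\int|\nabla(v-V)|^2=O(\e^4)$.

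For the tail, the triangle inequality together with Lemma~\ref{l:harmonicdecay} (applied to $V$ on $\{|x|\geq R_0\}$ after rescaling) give
\begin{equation*}
\int_{|x|\geq R_1}|\nabla n|^2\;\leq\;2\int_{|x|\geq R_1}|\nabla V|^2+2\int|\nabla(v-V)|^2\;\leq\;\frac{2R_0}{R_1}\e^2+O(\e^4).
\end{equation*}
Dividing by $R_1$, the choice $R_1^{2-\alpha}\geq 8R_0$ forces $2R_0/R_1^2\leq\tfrac12 R_1^{-\alpha}$, and then $\delta^2\lesssim R_1^{1-\alpha}$ (possible since $1-\alpha>-1$) absorbs the $O(\e^4)/R_1$ remainder into $\tfrac12 R_1^{-\alpha}\e^2$, yielding the claim. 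I expect the main obstacle to be the uniform $L^\infty$ control of $v$: since \eqref{eq:smallregrescaled} is an interior estimate, the pointwise bound only propagates to $\{|x|\geq R_0\}$ with $R_0>1$, so the competitor must be defined away from $\partial B_1$, and one must carefully verify the convergence $V^{(M)}\to V$ and of the corresponding Dirichlet energies as $M\to\infty$.
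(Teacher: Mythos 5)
Your proof is correct in its essentials and takes a genuinely different route from the paper. The paper argues by contradiction à la Luckhaus: it blows up $v_j=(n_j-n_0)/\delta_j$ along a sequence $\delta_j\to 0$, uses the Luckhaus interpolation lemma to show that any weak limit $v_\star$ is a $T_{n_0}\mathbb S^2$-valued energy minimizer (hence harmonic), upgrades weak to strong $H^1$ convergence, and then derives a contradiction with the harmonic decay of Lemma~\ref{l:harmonicdecay}. Your argument is direct and quantitative: you project $v=n-n_0$ onto $n_0^\perp$ to get $w=v+\tfrac12|v|^2n_0$, take the finite-energy harmonic extension $V$ of $w|_{\partial B_{R_0}}$, and lift $V$ back to $\mathbb S^2$ via $\sqrt{1-|V|^2}\,n_0+V$. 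The identity $\sqrt{1-|w|^2}=1-\tfrac12|v|^2$ makes the lift agree with $n$ on $\partial B_{R_0}$ on the nose, which is a genuinely nice observation that lets you avoid the Luckhaus gluing. The two-sided comparison $\int|\nabla V|^2\le\int|\nabla w|^2\le\int|\nabla n|^2\le(1+O(\e^2))\int|\nabla V|^2$ then squeezes $\int|\nabla(v-V)|^2=O(\e^4)$, and you finish with Lemma~\ref{l:harmonicdecay} applied to $V$. What your approach buys: a direct, quantitative proof with an explicit relationship between $\delta$, $R_1$ and $\alpha$, and no need for the Luckhaus lemma or a subsequence extraction. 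What it costs, as you yourself flag: you must verify the $M\to\infty$ passage (take a cutoff competitor $V+\eta_M(w-V)$ with $|\nabla\eta_M|\lesssim 1/M$ supported in $\{M/2\le|x|\le M\}$; the error terms vanish because $\int r^{-2}|w-V|^2<\infty$ and $\int|\nabla(w-V)|^2<\infty$ imply the tail contributions go to zero). A second point you call ``automatic'' but which deserves a line: $|\nabla w|^2\le|\nabla n|^2$ holds pointwise, because $\nabla w=\nabla v+(v\cdot\nabla v)\otimes n_0$ and $\partial_in\cdot n_0=-\partial_in\cdot v$ (from $\partial_in\cdot n=0$) give $|\nabla w|^2-|\nabla n|^2=-\sum_i(v\cdot\partial_in)^2\le0$. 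Finally, the paper notes that its contradiction argument generalizes to energies $\int A(u)[\nabla u,\nabla u]$ with merely $C^0$ dependence of $A$ on $u$; your Taylor-expansion of the lift $\sqrt{1-|V|^2}\,n_0+V$ is the kind of step that would need $C^1$ dependence, so the direct argument, like the paper's alternative Schoen-style proof of Step~1, is slightly less general in that respect.
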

	
	\begin{proof}[Proof of Lemma~\ref{l:energyimprovement}]
		The proof follows quite closely the strategy in \cite[Proposition~1]{luckhaus88} (see also \cite[Theorem~2.4]{hkl86}). 
		By rotational symmetry we may assume $n_0=(0,0,1)$.
		We fix $\alpha<2$. Since $T_{n_0}\mathbb S^2=n_0^\perp=\mathbb R^2\times\lbrace 0\rbrace$, thanks to Lemma~\ref{l:harmonicdecay} we may choose any $R_\star>1$ such that for any $T_{n_0}\mathbb S^2$-valued energy minimizing map $v$ in $\R^3\setminus \overline B_1$ with $\int\abs{v}^2\abs{x}^{-2}\lesssim\int\abs{\nabla v}^2<\infty$,
		\begin{equation}\label{eq:harmonicimprovement}
			\frac 1{R_\star} \int_{\abs{x}\geq R_\star}\abs{\nabla v}^2\leq \frac 14 \frac{1}{R_\star^\alpha}\int_{\abs{x}\geq 1}\abs{\nabla v}^2.
		\end{equation}
		Then we  fix $R_1=2R_\star$ and argue by contradiction, assuming Lemma~\ref{l:energyimprovement} to be false for this value of $R_1$. Hence there exist $\delta_j\to 0$  and minimizing $\mathbb S^2$-valued maps $n_j$ such that
		\begin{align*}
			&\int_{\abs{x}\geq 1}\frac{\abs{n_j-n_0}^2}{\abs{x}^2}\lesssim\int_{\abs{x}\geq 1}\abs{\nabla n_j}^2 =\delta_j^2\\
			\text{and}\quad & \frac 1{R_1} \int_{\abs{x}\geq R_1}\abs{\nabla n_j}^2 > \frac{1}{R_1^\alpha}\int_{\abs{x}\geq 1}\abs{\nabla n_j}^2.
		\end{align*}
		We set
		\begin{equation*}
			v_j:=\frac{n_j-n_0}{\delta_j},
		\end{equation*}
		so that
		\begin{equation}\label{eq:propvj}
			\int_{\abs{x}\geq 1}\frac{\abs{v_j}^2}{\abs{x}^2}\lesssim\int_{\abs{x}\geq 1}\abs{\nabla v_j}^2 =1\quad\text{and}\quad \frac 1{R_1} \int_{\abs{x}\geq R_1}\abs{\nabla v_j}^2 > \frac{1}{R_1^\alpha}\int_{\abs{x}\geq 1}\abs{\nabla v_j}^2.
		\end{equation}
		Up to a subsequence (that we do not relabel), there exists $v_\star\in H^1_{loc}(\R^3\setminus \overline B_1;\mathbb R^3)$ such that $v_j\rightharpoonup v_\star$ weakly in $H^1_{loc}$, strongly in $L^2_{loc}$, and almost everywhere. 
		Note that $v_\star(x)\in T_{n_0}\mathbb S^2$ for a.e. $x\in \R^3\setminus \overline B_1$.
		Indeed, considering a subsequence of $v_j$ converging a.e., we see that
		$v_\star(x)$ is the limit of vectors of the form $(z_j-n_0)/\delta_j$ for some $z_j\in\mathbb S^2$ and $\delta_j\to 0$, which implies first that $z_j\to n_0$, and then that $v_\star(x)\in T_{n_0}\mathbb S^2$.
		Furthermore,
		
		by lower semi-continuity,
		\begin{equation*}
			\int_{\abs{x}\geq 1}\frac{\abs{v_\star}^2}{\abs{x}^2}\lesssim\int_{\abs{x}\geq 1}\abs{\nabla v_\star}^2 \leq 1.
		\end{equation*}
		By Fubini's theorem we may moreover pick $r\in [1,2]$ such that 
		\begin{equation*}
			\int_{\abs{x}=r}\abs{\nabla v_\star}^2\lesssim 1 \, \qquad\text{and}\quad    \int_{\abs{x}=r}\abs{\nabla v_j}^2\lesssim 1.
		\end{equation*}
		By continuity of the trace operator and compactness of the embedding $H^{\frac 12}(\partial B_r)\subset L^2(\partial B_r)$ we have $\int_{\abs{x}=r}\abs{v_j-v_\star}^2\to 0$. We claim that $v_\star$ is a $T_{n_0}\mathbb S^2$-valued minimizing map in $\Omega_r=\lbrace \abs{x}>r\rbrace$. Let $v\in H_{loc}^1(\Omega_r; T_{n_0}\mathbb S^2)$ agree with $v_\star$ outside of a compact subset of $\Omega_r$. We will show that $\int\abs{\nabla v_\star}^2\leq\int\abs{\nabla v}^2$, thus proving the claim. Let 
		\begin{align*}
			\tilde v_j & =\frac{\delta_j^{-\frac 12} v}{\max(\delta_j^{-\frac 12},\abs{v})},\qquad
			\tilde n_j  = \pi_{\mathbb S^2}(n_0 + \delta_j \tilde v_j),
		\end{align*}
		where $\pi_{\mathbb S^2}$ is the orthogonal projection onto $\mathbb S^2$ 
		(well-defined in a neighborhood of it),
		so that 
		\begin{align*}
			&\abs{\nabla \tilde n_j}^2\leq \delta_j^2\left(1+O(\delta_j^{\frac 12})\right)\abs{\nabla v}^2,\qquad \tilde v_j\to v\text{ in }H^1_{loc}(\overline \Omega_r;T_{n_0}\mathbb S^2).
		\end{align*}
		Since $v=v_\star$ on $\partial B_r$ and $\int_{\abs{x}=r}\abs{v_j-v_\star}^2\to 0$, we also have
		\begin{align*}
			\gamma_j^2:=\int_{\partial B_r}\abs{v_j-\tilde v_j}^2\to 0.
		\end{align*}
		Moreover, using that $\pi_{\mathbb S^2}$ is smooth in a small neighborhood of $n_0$ and $\tilde v_j\cdot n_0=0$, we obtain
		\begin{align*}
			\tilde n_j-n_j&
			=\pi_{\mathbb S^2}(n_0+\delta_j\tilde v_j)-n_0 -\delta_j v_j 
			=\delta_j (\tilde v_j-v_j) +\mathcal O(\delta_j^2 |v|^2),
		\end{align*}
		so
		\begin{align*}
			\int_{\partial B_r}\abs{n_j-\tilde n_j}^2 \leq \delta_j^2  (\gamma_j^2 +c^2\delta_j^2),
		\end{align*}
		where $c>0$ is a constant depending on $v$.
		Luckhaus' extension lemma~\cite[Lemma~1]{luckhaus88} ensures, for any $\lambda\in (0,1)$, the existence of $\varphi_j\colon B_{(1+\lambda)r}\setminus B_r\to\mathbb R^3$ such that
		\begin{align*}
			\varphi_j=&n_j\text{ on }\partial B_r,\quad\varphi_j=\tilde n_j((1+\lambda)\cdot)\text{ on }\partial B_{(1+\lambda)r},\\
			\int_{B_{(1+\lambda)r}\setminus B_r}\abs{\nabla\varphi_j}^2&\lesssim \lambda \int_{\partial B_r}\left(\abs{\nabla n_j}^2 + \abs{\nabla\tilde n_j}^2\right) + \lambda^{-1} \int_{\partial B_r}\abs{n_j-\tilde n_j}^2\\
			&\lesssim \delta_j^2 \left( \lambda + \lambda^{-1}(\gamma_j^2+c^2\delta_j^2)\right),\\
			\sup_{B_{(1+\lambda)r}\setminus B_r}\dist^2(\varphi_j,\mathbb S^2)&\lesssim \lambda^{-1}\left(\int_{\partial B_r}\left(\abs{\nabla n_j}^2 + \abs{\nabla\tilde n_j}^2\right)\right)^{\frac 12}\left( \int_{\partial B_r}\abs{n_j-\tilde n_j}^2\right)^{\frac 12}\\
			&\quad +\lambda^{-2}\int_{\partial B_r}\abs{n_j-\tilde n_j}^2\\
			&\lesssim \delta_j^2 \left(\lambda^{-1}\gamma_j + \lambda^{-2}\gamma_j^2\right)
		\end{align*}
		Choosing $\lambda=\lambda_j=\gamma_j+c\delta_j\to 0$, we may thus define $\psi_j =\pi_{\mathbb S^2}(\varphi_j)\colon B_{(1+\lambda_j)r}\setminus B_r\to\mathbb S^2$ satisfying
		\begin{align*}
			&\psi_j=n_j\text{ on }\partial B_r,\quad\psi_j=\tilde n_j((1+\lambda_j)\cdot)\text{ on }\partial B_{(1+\lambda_j)r},\\
			\text{and }&\delta_j^{-2}\int_{B_{(1+\lambda_j)r}\setminus B_r}\abs{\nabla\psi_j}^2\to 0.
		\end{align*}
		Then we set 
		\begin{equation*}
			\hat n_j(x) =\begin{cases}
				\psi_j(x) &\quad\text{for }r\leq \abs{x} \leq(1+\lambda_j)r,\\
				\tilde n_j((1+\lambda_j)x) &\quad\text{for }\abs{x}\geq (1+\lambda_j)r.
			\end{cases}
		\end{equation*}
		Note that $\hat n_j$ agrees with $n_j$ on $\partial B_r$ and satisfies
		\begin{equation*}
			\int_{\abs{x}\geq 2}\frac{\abs{\hat n_j -n_0}^2}{\abs{x}^2} \lesssim \int_{\abs{x}\geq r}\frac{\abs{\tilde n_j -n_0}^2}{\abs{x}^2}\lesssim \delta_j^2 \int_{\abs{x}\geq r}\frac{\abs{\tilde v_j}^2}{\abs{x}^2}\lesssim \delta_j^2 \int_{\abs{x}\geq r}\frac{\abs{v}^2}{\abs{x}^2}<\infty,
		\end{equation*}
		since $v=v_\star$ outside of a compact set and $\int_{\abs{x}\geq 1}\abs{v_\star}^2\abs{x}^{-2}<\infty$. Therefore the $\hat n_j$ must have greater energy than $n_j$, hence
		\begin{align*}
			\int_{\abs{x}\geq r}\abs{\nabla v_j}^2 & =\delta_j^{-2}\int_{\abs{x}\geq r}\abs{\nabla n_j}^2  \leq \delta_j^{-2}\int_{\abs{x}\geq r}\abs{\nabla \hat n_j}^2 \\
			&\leq (1+o(1))\delta_j^{-2}\int_{\abs{x}\geq r}\abs{\nabla\tilde n_j}^2 + o(1)\\
			&\leq (1+o(1))\int_{\abs{x}\geq r}\abs{\nabla v}^2 +o(1)
		\end{align*}
		By weak lower semi-continuity of the Dirichlet energy with respect to $H^1_{loc}$ convergence we infer
		\begin{equation*}
			\int_{\abs{x}\geq r}\abs{\nabla v_\star}^2\leq \liminf \int_{\abs{x}\geq r}\abs{\nabla v_j}^2\leq \int_{\abs{x}\geq r}\abs{\nabla v}^2,
		\end{equation*}
		so that $v_\star$ is a $T_{n_0}\mathbb S^2$-valued energy minimizing map in $\Omega_r$, and moreover applying the above to $v=v_\star$ we deduce that
		\begin{equation*}
			\int_{\abs{x}\geq r}\abs{\nabla v_j-\nabla v_\star}^2\to 0.
		\end{equation*}
		In particular, since $\int_{\abs{x}\geq 1}\abs{\nabla v_j}^2=1$, \eqref{eq:propvj} implies that $\int_{\abs{x}\geq R_1}\abs{\nabla v_\star}^2>0$.
		Moreover, recalling that $r\in [1,2]$ and taking $j\to\infty$ in \eqref{eq:propvj} we obtain
		\begin{equation*}
			\frac{1}{R_1}\int_{\abs{x}\geq R_1}\abs{\nabla v_\star}^2\geq \frac{1}{R_1^\alpha}\int_{\abs{x}\geq 2}\abs{\nabla v_\star}^2,
		\end{equation*}
		hence, for $\hat v_\star(\hat x)=v_\star(2\hat x)$, recalling that $R_1=2 R_\star$ and $\alpha < 2,$ we have
		\begin{equation*}
			\frac{1}{R_\star}\int_{\abs{x}\geq R_\star}\abs{\nabla\hat v_\star}^2\geq  \frac{2^{1-\alpha}}{R_\star^\alpha}\int_{\abs{x}\geq 1}\abs{\nabla \hat v_\star}^2\geq \frac{1}{2}\frac{1}{R_\star^\alpha}\int_{\abs{x}\geq 1}\abs{\nabla \hat v_\star}^2.
		\end{equation*}
		Since $\hat v_\star$ is a $T_{n_0}\mathbb S^2$-valued energy minimizing map in $\R^3\setminus \overline B_1$ and $\int_{\abs{x}\geq 1}\abs{\nabla \hat v_\star}^2>0$, this contradicts \eqref{eq:harmonicimprovement}.
	\end{proof}

	We will plug the
	initial decay provided by Lemma~\ref{l:energyimprovement} 
	into the equilibrium equation \eqref{eq:eulerlagrange}
	in order to deduce the expansion \eqref{eq:expansion} 
	(implying  in particular \textit{a posteriori} that Lemma~\ref{l:energyimprovement} is also valid for $\alpha=2$).
	The main tool to obtain the expansion will be decay estimates for Poisson equation.   
	These estimates are familiar but not easily found in the form we require here, and so we have provided a proof in the  Appendix~\ref{a:decay}.
	With these preliminary lemmas, we are now ready to present the proof of Theorem~\ref{t:expansion}.
	
	\begin{proof}[Proof of Theorem \ref{t:expansion}]
		Let $R_1$ and $\delta$ be as in Lemma \ref{l:energyimprovement}. 
		
		\noindent\textbf{Step 1.} Picking $R_0>1$ (depending on $n$) such that $\frac{1}{R_0}\int_{\abs{x}\geq R_0}\abs{\nabla n}^2\leq \delta^2$ we may apply Lemma~\ref{l:energyimprovement} iteratively to $x\mapsto n(R_1^kR_0 x)$ for $k\geq 0$ and obtain
		\begin{equation*}
			\frac{1}{R_1^{k} R_0}\int_{\abs{x}\geq R_1^{k}R_0}\abs{\nabla n}^2\leq \frac{\delta^2}{(R_1^k)^\alpha},
		\end{equation*}
		and therefore
		\begin{equation*}
			\frac{1}{R}\int_{\abs{x}\geq R}\abs{\nabla n}^2\leq \frac{C(n,\alpha)}{R^\alpha}\quad\forall R\geq R_0(n),\alpha<2.
		\end{equation*}
		Thanks to \eqref{eq:smallregrescaled} this implies
		\begin{equation*}
			\abs{\nabla n}\leq \frac{C(n,\sigma)}{r^{2-\sigma}}\qquad\text{for } r\geq R_0(n),\sigma>0.
		\end{equation*}
		
		Here we are interested in small values of $\sigma>0$, and $C(n,\sigma)>0$ denotes a generic constant depending on $n$ and $\sigma$, whose precise value may change from line to line in the rest of the proof.
		Integrating this along radial rays yields $\abs{n-n_0}\leq C(n,\sigma)/r^{1-\sigma}$. Moreover, since $-\Delta n =\abs{\nabla n}^2n$ we have (redefining $\sigma$ appropriately)
		\begin{equation*}
			\abs{\Delta n}\leq \frac{C(n,\sigma)}{r^{4-\sigma}}\qquad\text{for } r\geq R_0(n),\sigma>0.
		\end{equation*}
		\textbf{Step 2.} 
		Applying Lemma~\ref{l:decaycorrecptwise} to $f_1 =\Delta n = \Delta (n - n_0)$ we obtain the existence of $u_1\colon \R^3\setminus B_{R_0}\to\R^3$ such that $\Delta u_1 =\Delta (n - n_0)$ and
		\begin{equation} \label{e.u1rem}
			\frac{\abs{u_1}}{r} + \abs{\nabla u_1}\leq \frac{C(n,\sigma)}{r^{3-\sigma}}\qquad\text{for } r\geq R_0(n).
		\end{equation}
		The map $n-n_0-u_1$ is harmonic in $\R^3\setminus B_{R_0}$.
		Writing down its spherical harmonics expansion, 
		we modify $u_1$ to include the part of the expansion that decays faster than $1/r$. 
		Specifically, we have
		\begin{align*}
			n-n_0-u_1 =\frac{1}{r}v_0 +\tilde u_1,
		\end{align*}
		for some $v_0\in\R^3$ and a remainder $\tilde u_1$ satisfying
		$\abs{\tilde u_1}/r + \abs{\nabla \tilde u_1}=\mathcal O(1/r^3)$.
		Therefore,  replacing $u_1$ by $u_1+\tilde u_1$ (without renaming it), we obtain
		\begin{equation} \label{e.nexp0}
			n=n_0 + \frac{1}{r}v_0 +u_1,
		\end{equation}
		with $u_1$ still satisfying \eqref{e.u1rem}. The vector $v_0$ is, a posteriori, uniquely determined by the map $n$, since $v_0=\lim_{r\to\infty} r(n-n_0)$. Moreover,
		this implies
		\begin{align*} 
			1=|n|^2 =1+ \frac{2}{r}v_0\cdot n_0 +\mathcal O\left(\frac 1{r^{2-\sigma}}\right),
		\end{align*}
		so we must  have
		\begin{equation*}
			v_0\cdot n_0 =0.
		\end{equation*}
		
		\noindent\textbf{Step 3.} With an eye toward obtaining the next term in the far-field expansion, we plug in \eqref{e.nexp0} into the harmonic maps PDE \eqref{eq:eulerlagrange}, and isolate terms that are higher order than $\mathcal O(\frac{1}{r^5})$ on the right hand side. 
		Specifically, we have
		\begin{align*}
			0=\Delta n +|\nabla n|^2n 
			&=\Delta u_1 + \frac{1}{r^4}|v_0|^2n_0 +\mathcal O\left(\frac 1{r^{5-\sigma}}\right) \\
			&=\Delta\left( u_1 + \frac{1}{r^2}\frac{|v_0|^2}{2}n_0\right) +\mathcal O\left(\frac 1{r^{5-\sigma}}\right),
		\end{align*}
		that is,
		\begin{align*}
			\Delta \left(u_1+\frac{1}{r^2}\frac{\abs{v_0}^2}{2}n_0 \right)&= f_2,
			%f_2 &= 
			%-\frac{1}{r^5}\abs{v_0}^2v_0 -\frac{1}{r^4}\abs{v_0}^2 u_1 + \frac{2}{r^2}(v_0\cdot\partial_r u_1)n -\abs{\nabla u_1}^2n,
		\end{align*}
		where $f_2$ has decay rate given by $\abs{f_2}\leq C(n,\sigma)/r^{5-\sigma}$ for $r\geq R_0(n).$ By Lemma \ref{l:decaycorrecptwise}, we obtain $u_2\colon \R^3\setminus B_{R_0}\to\R^3$ such that $\Delta u_2 =f_2$ and 
		\begin{equation*}
			\frac{\abs{u_2}}{r} + \abs{\nabla u_2}\leq \frac{C(n,\sigma)}{r^{4-\sigma}}\qquad\text{for } r\geq R_0(n).
		\end{equation*}
		The map $u_1+r^{-2}\abs{v_0}^2n_0/2 -u_2$ is harmonic in $\R^3\setminus B_{R_0}$, hence including the higher decay part of its spherical harmonics expansion into $u_2$ we deduce the existence of  $P_1\in \mathbb R[X]^3$, a vector of homogeneous harmonic polynomials of degree 1 (i.e. linear forms) such that
		\begin{equation*}
			u_1 =-\frac{1}{r^2}\frac{\abs{v_0}^2}{2}n_0 + \frac{1}{r^3} P_1(x) +u_2,
		\end{equation*}
		i.e.
		\begin{equation} \label{e.nexp2}
			n=\left(1-\frac{\abs{v_0}^2}{2r^2}\right) n_0 +\frac 1r v_0 +\frac{1}{r^3} P_1(x) +u_2.
		\end{equation}
		Note that the unit norm constraint on $n$ implies $n_0\cdot P_1(x)=0$ for all $x$. Indeed, taking the norm square of \eqref{e.nexp2}, we find 
		\begin{align*}
			1 = |n|^2 = 1 + \frac{2 \, n_0 \cdot P_1(x/r)}{r^2} + \mathcal O\left(\frac{1}{r^{3-\sigma}}\right),
		\end{align*}
		which implies $n_0 \cdot P_1(x) \equiv 0$. 
		Writing $P_1(x)/r^3=\sum {p_j}\partial_j(1/r)$,
		we must have $p_j\cdot n_0=0$ for $j=1,2,3$.
		
		\noindent\textbf{Step 4.} As before, we plug \eqref{e.nexp2} back again into the equation \eqref{eq:eulerlagrange} and isolate terms that are $\mathcal O(\frac{1}{r^6} )$ on the right hand side. We find
		\begin{align*}
			0 =\Delta n+|\nabla n|^2n &=\Delta u_2 + \frac{1}{r^5}|v_0|^2v_0 
			+\frac{4}{r^6}(v_0\cdot P_1(x))\, n_0
			+ \mathcal O\left(\frac{1}{r^{6-\sigma}}\right) \\
			&=\Delta \left( u_2 + \frac{1}{6r^3}|v_0|^2v_0
			+\frac 1{3r^4}(v_0\cdot P_1(x))\, n_0
			\right) + \mathcal O\left(\frac{1}{r^{6-\sigma}}\right) .
		\end{align*}
		Applying Lemma~\ref{l:decaycorrecptwise} and arguing as in Steps~2 and 3, we deduce the existence of $P_2\in \R[X]^3$ a vector of homogeneous harmonic polynomials of degree 2 (i.e. harmonic quadratic forms) such that we have the expansion
		\begin{align*} 
			n&=\left(1-\frac{\abs{v_0}^2}{2r^2} \right) n_0
			+\frac 1r v_0 
			+\frac{1}{r^3} P_1(x) 
			-\frac{\abs{v_0}^2}{6r^3}v_0
			-\frac{1}{3r^4}
			(v_0\cdot P_1)\, n_0
			+\frac{1}{r^5} P_2(x)
			+u_3,\\
			&\frac{\abs{u_3}}{r}+\abs{\nabla u_3}\leq \frac{C(n,\sigma)}{r^{5-\sigma}}\qquad\text{for }r\geq R_0.
		\end{align*}
		With one more iteration we realize that the decay $u_3=\mathcal O(1/r^{4-\sigma})$ improves to $u_3=\mathcal O(1/r^{4})$. Writing $P_1(x)/r^3=\sum {p_j}\partial_j(1/r)$ and $P_2(x)/r^5=\sum c_{k\ell}\partial_k\partial_\ell(1/r)$,  
		the proof of Theorem~\ref{t:expansion} is complete. 
	\end{proof}

	\begin{proof}[Alternative proof of Step~1]
		We present here another proof of Step~1, inspired by \cite[Proposition~3]{schoen83}.
		The map $w=\partial_k n$ solves, for $r=|x|\geq R_0$, the system
		\begin{align}\label{eq:w}
			-\Delta w =2\nabla n:\nabla w\, n + |\nabla n|^2 w,
		\end{align}
		where for matrices $A,B$ we use the notation $A : B := \mathrm{tr}(A^TB),$ for their Frobenius inner product.   
		Testing \eqref{eq:w} with $\eta^2w$ for some smooth cut-off function $\eta$ we obtain
		\begin{align*}
			\int\eta^2|\nabla w|^2 &\lesssim \int |\eta|\,|\nabla \eta| \, |w|\,|\nabla w| 
			+ \int \eta^2 |w| |\nabla n| |\nabla w| +\int \eta^2 |\nabla n|^2 |w|^2 \\
			& \leq \frac 12 \int \eta^2 |\nabla w|^2 +C \int |\nabla\eta|^2|w|^2 + \eta^2|\nabla n|^2 |w|^2.
		\end{align*}
		Absorbing the first term of the last line in the left-hand side, choosing $\mathbf 1_{R\leq |x|\leq 2R}\leq\eta \leq \mathbf 1_{R/2\leq |x|\leq 3R}$ with $|\nabla\eta|\lesssim 1/R$,
		and using $|w|^2\leq |\nabla n|^2\lesssim 1/r^{3}$ thanks to  \eqref{eq:smallregrescaled}, we deduce
		\begin{align*} 
			\int_{R\leq |x|\leq 2R}|\nabla w|^2 &\lesssim \frac{1}{R^2},
		\end{align*}
		hence
		\begin{align}\label{e.decay1}
			\int_{|x|\geq R}|\nabla w|^2 \leq \sum_{k\geq 0}\int_{2^k R\leq |x|\leq 2^{k+1}R}|\nabla w|^2 \lesssim \sum_{k\geq 0}\frac{1}{2^{2k}R^2}\lesssim \frac{1}{R^2}
		\end{align}
		
		Therefore,  plugging in \eqref{eq:smallregrescaled} and \eqref{e.decay1} in \eqref{eq:w}, 
		we find that the right-hand side of \eqref{eq:w}  has $\mathcal O(R^{-4})$ decay in an appropriate $L^2$ sense. To be precise, 
		\begin{align*}
			-\Delta w =f,\qquad \left(\frac{1}{R^3}\int_{ |x|\geq R}|x|^2|f|^2\right)^{\frac 12}\lesssim \frac{1}{R^3}.
		\end{align*}
		Applying Lemma~\ref{l:decay}  with the choice $\gamma = 3 - \sigma/2$ for any small $\sigma > 0,$ we deduce the existence of a map $u$ such that $-\Delta u=f$ and
		\begin{align*}
			\left(\frac{1}{R^3}\int_{|x|\geq R}\frac{|u|^2}{|x|^2}\right)^{\frac 12}\lesssim \frac{1}{R^{3-\sigma/2}},
		\end{align*}
		which implies
		\begin{align*}
			\int_{|x|\geq R}|u|^2 \leq \sum_{k\geq 0} 2^{2k+2}R^2\int_{2^k R\leq |x|\leq 2^{k+1}R} \frac{|u|^2}{|x|^2} \lesssim \sum_{k\geq 0}\frac{1}{2^{(1-\sigma)k}}\frac{1}{R^{1-\sigma}}\lesssim \frac{1}{R^{1-\sigma}},
		\end{align*}
		for any $\sigma>0$.
		
		Since $w-u$ is harmonic and square integrable at $\infty$, we have $w-u=\mathcal O(1/r^2)$ as $r\to\infty$, and deduce from this and the above that
		\begin{align*}
			\int_{|x|\geq R}|w|^2 \lesssim\frac{1}{R^{1-\sigma}}.
		\end{align*}
		Recalling $w=\partial_k n$ this implies, together with \eqref{eq:smallregrescaled}, $|\nabla n|^2\lesssim 1/r^{4-\sigma}$ and the iteration starting in Step~2 of Theorem~\ref{t:expansion}'s proof can now be applied.
	\end{proof}

	\begin{rem}\label{r:expansionsym}
		We sketch here how to modify the proof of Theorem~\ref{t:expansion} for maps $n$ which are minimizing only among axisymmetric configurations, so Corollary~\ref{c:symexp} applies also in that case.
		First of all, $n$ is smooth outside of a large finite ball thanks to small energy estimates which are valid also in that setting: see e.g. \cite[Lemma~4.1]{HKL90} where the symmetry condition is slightly more restrictive but the proof can be adapted, or note that $n$ is stationary harmonic thanks to the methods in \cite[\S~2.1]{DMP21} and apply \cite[Theorem~I.4]{bethuel93}.
		Then the alternative proof of Step~1 applies without modification, as do the rest of the steps. The first proof of Step~1 can also be applied, with the constraint that the constructed comparison map needs to be axisymmetric.
	\end{rem}

	\section{The leading-order term} \label{s:torque}
	
	In this section we prove Theorem~\ref{t:torque} and Corollary~\ref{c:sym}.

	\begin{proof}[Proof of Theorem~\ref{t:torque}]
		Without loss of generality assume $G\subset B_1$ and fix a $C^1$ function $\chi\colon \mathbb R^3\to [0,1]$ such that $\chi\equiv 0$ on $B_1$ and $\int_{|x|\geq 1} |x|^{-2} (\chi-1)^2\, dx\lesssim \int_{|x|\geq 1} |\nabla \chi|^2\, dx <\infty$.
		Here, as stated in the introduction, $\lesssim$ denotes inequality up to an absolute constant, the cut-off function $\chi$ being fixed.
		In what follows, for any $m_0 \in \mathbb{S}^2$,
		we denote by 
		\begin{align*}
			H(m_0) = \left\{ m\in H^1_{loc}(\R^3\setminus G;\mathbb S^2)\colon  \int_{\RR^3\setminus G} \frac{|m-m_0|^2}{1+ r^2} + \int_{\R^3\setminus G} |\nabla m|^2   + F_s(m_{\lfloor \partial G})<\infty \right\},
		\end{align*}
		the class of admissible competitors in the minimization problem \eqref{eq:hatEn0} defining $\hat E(m_0)$. 
		This class depends also on $\partial G$ and $F_s$, which remain fixed throughout the proof.
		
		\medskip
		{\bf Step 1:} The map $\hat E$ is Lipschitz.
		\medskip
		
		Let $n_1,n_2$ be minimizers with far-field alignments $n_1^\infty,n_2^\infty$. 
		For any angle $\vartheta\in\R$, we denote by $\mathcal R(\vartheta)\in SO(3)$ the rotation of axis $e_1$ and angle $\vartheta$.
		We choose the frame such that $n_1^\infty=e_3$ and $n_2^\infty =\mathcal R(\theta) e_3$, where $\theta$ is an angle satisfying $|n_1^\infty-n_2^\infty|\leq \theta \leq 2 |n_1^\infty-n_2^\infty|$.
		Consider now the map 
		$\tilde n_1\in H(n_1^\infty)$ given by
		\begin{align*}
			\tilde n_1(x) = \mathcal R(-\chi(x)\theta)\, n_2(x).
		\end{align*}
		We have
		\begin{align*}
			|\nabla\tilde n_1|^2 &\leq \theta^2 |\nabla\chi|^2 + |\nabla n_2|^2 + 2 \theta \, |\nabla \chi|\,|\nabla n_2| \\
			&\leq (1+\lambda^{-1})\theta^2 |\nabla\chi|^2 + (1+\lambda)|\nabla n_2|^2 ,
		\end{align*}
		for any $\lambda>0$, hence
		\begin{align*}
			\hat E(n_1^\infty)\leq C(1+\lambda^{-1})|n_1^\infty -n_2^\infty|^2 + (1+\lambda) \hat E(n_2^\infty)  .
		\end{align*}
		Applying this to $\lambda=1$ and a fixed $n_2^\infty$ we deduce in particular that $\hat E$ is bounded on $\mathbb S^2$. Moreover, choosing $\lambda=|n_1^\infty-n_2^\infty|$ we obtain
		\begin{align*}
			\hat E(n_1^\infty)-\hat E(n_2^\infty)\leq |n_1^\infty -n_2^\infty| \left( \hat E(n_2^\infty) + C + C |n_1^\infty -n_2^\infty|\right).
		\end{align*}
		Reversing the roles of $n_1,n_2$ and recalling that $\hat E(n^\infty)$ is bounded on $\mathbb{S}^2$, we conclude that $\hat E$ is Lipschitz.
		
		\medskip
		{\bf Step 2:} 
		At every differentiability point $n_0\in\mathbb S^2$ of $\hat E$ we have
		$\nabla \hat E(n_0)=-8\pi v_0$, where $v_0=\lim_{r\to\infty} r(n-n_0)\in T_{n_0}\mathbb S^2$ for any minimizer $n$ such that $E(n)=\hat E(n_0)$.
		Here recall that $r=|x|$ and the limit $v_0$ is well-defined for any such map $n$, thanks to Theorem~\ref{t:expansion}.
		
		\medskip
		
		Let $n_0\in\mathbb S^2$ be a differentiability point of $\hat E$. For any axis $e\in\mathbb S^2$ let $\mathcal R(\theta)$ be the rotation of axis $e$ and angle $\theta$, and set $n_\theta^\infty=\mathcal R(\theta) n_0$, so that
		\begin{align*}
			\hat E(n_\theta^\infty)-\hat E(n_0)=\nabla \hat E(n_0)\cdot (\mathcal R'(0) n_0) + o(\theta)\qquad\text{as }\theta\to 0.
		\end{align*}
		Define $\tilde n\in H( n_\theta^\infty)$ by $\tilde n=\mathcal R(\chi\theta) n$,  where $n$ is a minimizer such that $E(n)=\hat E(n_0)$. Using the equation satisfied by $n$ and the fact that $\tilde n=n$ in $\partial G$, for all $R>1$ we have
		\begin{align*}
			&
			%E(\tilde n)-E(n)=
			\int_{B_R\setminus G} |\nabla \tilde n|^2-\int_{B_R\setminus G}|\nabla n|^2
			\\
			&=
			\int_{B_R\setminus G} \left(2\nabla n\cdot \nabla(\tilde n-n) +|\nabla (\tilde n-n)|^2\right)\\
			&=2\int_{\partial B_R}\partial_r n\cdot (\tilde n-n) 
			+\int_{B_R\setminus G} \left(-2\Delta n \cdot (\tilde n-n) 
			+|\nabla (\tilde n-n)|^2\right)\\
			&=2\int_{\partial B_R}\partial_r  n\cdot (\tilde n-n)
			+ \int_{B_R\setminus G} 2|\nabla n|^2 n \cdot (\tilde n-n) 
			+ \int_{B_R\setminus G} |\nabla (\tilde n-n)|^2\\
			&=2\int_{\partial B_R}\partial_r n\cdot (\tilde n-n)
			- \int_{B_R\setminus G} |\nabla n|^2 |\tilde n-n|^2 
			+ \int_{B_R\setminus G} |\nabla (\tilde n-n)|^2
		\end{align*}
		Using the asymptotic expansion of the minimizing map $n$ 
		($n=n_0 +v_0/r +u_1$, see \eqref{e.nexp0}, with $|u_1|/r + |\nabla u_1|=\mathcal O(1/r^3)$ thanks to \eqref{e.nexp2}) we have
		\begin{align*}
			\int_{B_R}\partial_r n\cdot (\tilde n-n)=-8\pi v_0\cdot (n_\theta^\infty-n_0) + O(1/R)\quad\text{as } R\to\infty,
		\end{align*}
		where $v_0=\lim_{r\to\infty} r(n-n_0)\in T_{n_0}\mathbb S^2$. We deduce that
		\begin{align} \label{e.locsemconc}
			&\hat E(n_\theta^\infty)-\hat E(n_0) \nonumber\\
			&\leq E(\tilde n)- E(n)
			=\lim_{R\to\infty} \left(
			\int_{B_R\setminus G} |\nabla \tilde n|^2-\int_{B_R\setminus G}|\nabla n|^2
			\right)
			\nonumber\\
			&=-8\pi v_0 \cdot (n_\theta^\infty-n_0)
			- \int_{\R^3\setminus G} |\nabla n|^2 |\tilde n-n|^2 
			+ \int_{\R^3\setminus G} |\nabla (\tilde n-n)|^2 \nonumber\\
			& \leq  -8\pi v_0 \cdot (n_\theta^\infty-n_0) + C\left(1+\int_{\R^3\setminus G}|\nabla n|^2 \right)  \theta^2.
		\end{align}
		The last estimate follows from the explicit form of $\tilde n=\mathcal R(\chi\theta) n$, and the constant $C$ depends only on the fixed cut-off function $\chi$. In particular we have
		\begin{align*}
			\hat E(n_\theta^\infty)-\hat E(n_0) \leq -8\pi v_0\cdot (\mathcal R'(0) n_0) + O(\theta^2),
		\end{align*}
		which implies
		\begin{align*}
			(\nabla \hat E(n_0)+8\pi v_0)\cdot (\mathcal R'(0) n_0)\leq 0.
		\end{align*}
		Since $\mathcal R'(0) n_0$ can be any tangent vector in $T_{n_0}\mathbb S^2$ we infer that $\nabla \hat E(n_0)+8\pi v_0=0$.
		\\
		\textbf{Step 3.} It remains to prove that $\hat E$ is semiconcave. This follows directly from the inequality \eqref{e.locsemconc} obtained in Step 2, as any $m_0\in\mathbb S^2$ can be written as $m_0=n_\theta^\infty$ for some $0\leq\theta\leq 2|m_0-n_0|$.
		This completes the proof of  Theorem~\ref{t:torque}.
	\end{proof}

	\begin{proof}[Proof of Corollary \ref{c:sym}]
		Consider first the axisymmetric case $\mathrm{Sym}(G)\supset SO(3)^{\mathbf u}$. Then we have $\hat E(Rn_0)=\hat E(n_0)$ for any rotation $R$ of axis $\mathbf u$ and $n_0\in\mathbb S^2$. At a differentiable point $n_0$, differentiating this identity with respect to $R$ implies $\nabla \hat E(n_0)\cdot An_0=0$ for any antisymmetric matrix $A$ with $A\mathbf u=0$, i.e. $\nabla \hat E(n_0)\cdot (\mathbf u\times n_0)=0$. 
		Recalling from Theorem~\ref{t:torque} that $\nabla \hat E(n_0)=-8\pi v_0$, we deduce $v_0\cdot (\mathbf u\times n_0)=0$.

		Moreover, if $\mathbf u$ is a differentiability point, then differentiating that same identity
		with respect to $n_0$ at $n_0=\mathbf u$ gives $R^{-1}\nabla  \hat E(\mathbf u)=\nabla\hat E(\mathbf u)$ for any rotation $R$ of axis $\mathbf u$, hence $\nabla \hat E(\mathbf u)=0$ since $\nabla \hat E(u)\in T_{\mathbf u}\mathbb S^2=\mathbf u^\perp$.  So $v_0(\mathbf u)=0$.
		
		In the spherically symmetric case $\mathrm{Sym}(G)\supset SO(3)$ we have $\hat E(Rn_0)=\hat E(n_0)$ for all $R\in SO(3)$, hence $\hat E$ is constant, and $\nabla\hat E=0$ on $\mathbb S^2$.
		So $v_0(n_0)=0$ for all $n_0\in\mathbb S^2$.
	\end{proof}
	
	\section*{Acknowledgements}
	S.A. and L.B. were supported via an NSERC (Canada) Discovery Grant.  X.L. received  support  from ANR project ANR-18-CE40-0023. The work of R.V. was partially supported by a grant from the Simons Foundation (award \# 733694) and an AMS-Simons travel award. 
	We wish to thank the anonymous referees for the many substantial improvements they suggested.

	\appendix
	
	\section{Decay estimates for Poisson's equation} \label{a:decay}
	
	We collect here some folklore decay estimates for Poisson's equation. For the reader's convenience we include  a self-contained proof (similar arguments can be found e.g. in \cite[\S~2.2.3]{pacardriviere} for H\"older decay at the origin). 
	The elementary arguments we present here don't seem to apply directly for general systems as in Remark~\ref{r:general}, in that case one should refer to \cite[\S~5-6]{BGO}.

	\begin{lem}\label{l:decaycorrecptwise}
		Let $d\geq 3$, $\gamma >d-2$, $\gamma\notin\mathbb N$,  and  $f$ a function in $\R^d\setminus B_1$ satisfying 
		\begin{align*}
			|f(x)|\leq \frac{1}{r^{\gamma +2}}\qquad\text{for }r=|x|\geq 1.
		\end{align*}
		Then there exists a function $u$ such that $\Delta u =f$ in $\R^d\setminus B_1$ and
		\begin{align}\label{eq:uptwisebound}
			\frac{|u(x)|}{r} +|\nabla u(x)|\lesssim \frac{1}{r^{\gamma +1}},
		\end{align}
		where the constant depends only on $d$ and $\gamma$.
	\end{lem}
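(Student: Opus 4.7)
The plan is to construct $u$ as the Newtonian potential of $f$ corrected by a finite multipole expansion, chosen to cancel the slowly decaying contributions. Let $\Phi(x)=c_d|x|^{-(d-2)}$ be the fundamental solution of $-\Delta$ on $\mathbb R^d$ and extend $f$ by zero on $B_1$. Since $\gamma+2>d$, $f\in L^1(\mathbb R^d)$, so
\begin{equation*}
v(x) := \int_{\mathbb R^d}\Phi(x-y)f(y)\,dy
\end{equation*}
is well-defined and satisfies $\Delta v = f$ on $\mathbb R^d\setminus B_1$. A priori $v$ decays only like $|x|^{-(d-2)}$, which is too slow since $\gamma>d-2$, so a harmonic correction will be needed.

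First I would bound $v$ by splitting the $y$-integration at $|y|=|x|/2$. The far part, further split into $|x|/2<|y|<2|x|$ (using $|f(y)|\lesssim |x|^{-\gamma-2}$ together with $\int_{|x-y|<3|x|}|x-y|^{-(d-2)}\,dy\lesssim |x|^2$) and $|y|>2|x|$ (using $|x-y|\sim|y|$), gives a contribution $\lesssim |x|^{-\gamma}$. For the near part I would Taylor expand
\begin{equation*}
\Phi(x-y) = \sum_{|\alpha|\le N}\frac{(-y)^\alpha}{\alpha!}\partial^\alpha\Phi(x) + R_N(x,y), \qquad |R_N(x,y)|\lesssim \frac{|y|^{N+1}}{|x|^{d-1+N}},
\end{equation*}
valid for $|y|\le|x|/2$, with $N:=\lceil \gamma-d+1\rceil$, and define
\begin{equation*}
u(x) := v(x) - \sum_{|\alpha|\le N}\frac{a_\alpha}{\alpha!}\,\partial^\alpha\Phi(x), \qquad a_\alpha := \int_{\mathbb R^d}(-y)^\alpha f(y)\,dy.
\end{equation*}
The correction is harmonic away from $0$, so $\Delta u = f$ still holds. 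Substituting the Taylor expansion in the near part and reorganizing, $u(x)$ is the sum of $v_{\mathrm{far}}(x)$, the Taylor remainder $\int_{|y|\le|x|/2} R_N(x,y) f(y)\,dy$, and tail moments $-\sum_{|\alpha|\le N}\frac{\partial^\alpha\Phi(x)}{\alpha!}\int_{|y|>|x|/2}(-y)^\alpha f(y)\,dy$, and a direct computation of the radial integrals shows that each of these three contributions is $\lesssim |x|^{-\gamma}$.

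The hypothesis $\gamma\notin\mathbb N$ is what makes the bookkeeping work: it guarantees $\gamma-d+1\notin\mathbb Z$, so that $|\alpha|<\gamma+2-d$ for every $|\alpha|\le N$ (making the moments $a_\alpha$ absolutely convergent and the tail-moment integrals $\int_{|y|>|x|/2}|y|^{|\alpha|-\gamma-2}dy$ genuine powers of $|x|$), while simultaneously $N+d-2-\gamma>-1$ so that the radial integral $\int_1^{|x|/2} r^{N+d-2-\gamma}\,dr$ controlling $R_N$ is a power rather than a logarithm. The main obstacle is precisely this calibration of $N$: too small and the Taylor remainder fails to beat $|x|^{-\gamma}$, too large and one or more $a_\alpha$ diverge; at integer values of $\gamma-d+1$ the two requirements collide and logarithms appear. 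Finally, once $|u(x)|\lesssim |x|^{-\gamma}$ is in hand, the gradient bound $|\nabla u(x)|\lesssim |x|^{-\gamma-1}$ follows from a scale-invariant interior estimate applied on $B_{|x|/2}(x)$, combining the size of $u$ there with $|\Delta u|=|f|\lesssim |x|^{-\gamma-2}$.
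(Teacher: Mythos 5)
Your proof is correct, and it takes a genuinely different route from the one in the paper. The paper proves the pointwise Lemma~\ref{l:decaycorrecptwise} by first establishing an $L^2$-on-annuli version (Lemma~\ref{l:decay}): extend $f$ to $\R^d$, expand in spherical harmonics $f=\sum_j f_j(r)\Phi_j(\omega)$, and for each mode solve the radial ODE $\mathcal L_j u_j=f_j$ by an explicit double integral, integrating from $\infty$ for the low modes $j\le j_0$ and from $0$ for the high modes $j>j_0$. That choice of base point is precisely what kills the slowly-decaying homogeneous solutions $r^{-\gamma_j^-}$ with $\gamma_j^-<\gamma$, and $\gamma\notin\mathbb N$ guarantees the threshold $j_0$ is unambiguous and the constants $(\gamma-\gamma_{j_0}^-)^{-1}$, $(\gamma_{j_0+1}^--\gamma)^{-1}$ are finite. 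The pointwise bound \eqref{eq:uptwisebound} is then obtained from the $L^2$ bound by rescaled interior elliptic estimates, exactly as you do at the end. Your construction instead takes the Newtonian potential $\Phi*f$ and subtracts the multipole expansion $\sum_{|\alpha|\le N}\frac{a_\alpha}{\alpha!}\partial^\alpha\Phi$ with $N=\lceil\gamma-d+1\rceil$; the role of $\gamma\notin\mathbb N$ is analogous (the calibration window $\gamma-d+1<N<\gamma-d+2$ is nonempty precisely because $\gamma-d+1\notin\mathbb Z$, so the moments $a_\alpha$ converge and the Taylor-remainder integral produces a power rather than a log). The two constructions remove the same obstruction — the finitely many harmonics decaying slower than $r^{-\gamma}$ — but your version is more elementary, requiring only Taylor's theorem for $\Phi$, the convolution splitting $|y|\lessgtr|x|/2$, and one interior estimate, and it yields the pointwise bound directly without passing through an $L^2$ intermediate. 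What the spherical-harmonics route buys in exchange is the $L^2$ lemma itself, which the paper uses independently in the alternative proof of Step~1 of Theorem~\ref{t:expansion} (where the source term is only controlled in a weighted $L^2$ sense, not pointwise), and cleaner explicit constants. Two cosmetic points in your write-up: your $\Phi$ is the fundamental solution of $-\Delta$, so $v=\Phi*f$ satisfies $-\Delta v=f$ and you should flip a sign to land on $\Delta u=f$; and it is worth noting explicitly that $N\ge 0$, which holds since $\gamma-d+1>-1$.
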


	Note that \eqref{eq:uptwisebound} doesn't determine $u$ uniquely, as we may add any faster-decaying harmonic terms to $u$ without changing the equation $\Delta u=f$, but the proof does determine an explicit  right inverse $f\mapsto u$ to the Laplacian in that decay range.

	We will obtain Lemma~\ref{l:decaycorrecptwise} as a consequence of an $L^2$ version of it, that we state now.

	\begin{lem}\label{l:decay}
		Let $d\geq 3$, $\gamma >d-2$, $\gamma\notin\mathbb N$, and  $f$ a function in $\R^d\setminus B_1$ satisfying 
		\begin{align} \label{e.fdecay}
			\left(\frac{1}{R^d}\int_{\abs{x}> R} \abs{x}^2 f^2 \, dx  \right)^{\frac 12}&\leq \frac{1}{R^{\gamma+1}}\qquad\forall R\geq 1,
			%\\
			%\text{and}\quad \left(\int_{\abs{x}\leq 1} \abs{x}^2f^2 \, dx\right)^{\frac 12} &\leq 1.
		\end{align}
		Then there exists a function $u$ such that $\Delta u =f$ in $\R^d\setminus B_1$ and
		\begin{align} \label{e.udecay}
			\left(\frac{1}{R^d}\int_{\abs{x}\geq R}\frac{\abs{u}^2}{|x|^2}\, dx\right)^{\frac 12}
			&\lesssim \frac{1}{R^{\gamma+1}}\qquad\forall R\geq 1,
		\end{align}
		where the implicit constant depends only on $d$ and $\gamma$.
	\end{lem}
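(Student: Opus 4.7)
My plan is to construct $u$ via a dyadic decomposition of $f$ and a scale-by-scale analysis. Set $A_k=\{2^k\le|x|<2^{k+1}\}$ for $k\ge 0$ and $f_k=f\,\mathbf 1_{A_k}$, so $f=\sum_{k\ge 0} f_k$. Applying \eqref{e.fdecay} at $R=2^k$ together with $|x|\sim 2^k$ on $A_k$ yields
\[
\|f_k\|_{L^2}^2 \lesssim 2^{k(d-2\gamma-4)}, \qquad \int_{A_k}|y|^m|f_k(y)|\,dy \lesssim 2^{k(m+d-\gamma-2)}\quad\forall\, m\ge 0,
\]
so each $f_k$ is quantitatively small and concentrated at scale $2^k$. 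I construct $u=\sum_{k\ge 0} u_k$ with each $u_k$ solving $\Delta u_k=f_k$ in $\R^d\setminus\overline B_1$ via an explicit Newton-potential formula, then deduce \eqref{e.udecay} by a dyadic bookkeeping.

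\textbf{Construction and estimates of the pieces.} Let $\Phi$ denote the fundamental solution of $\Delta$ on $\R^d$, and let $L$ be the unique integer in the open interval $(\gamma-d+2,\gamma-d+3)$, which exists because $\gamma\notin\mathbb N$. Define
\[
u_k(x)=\int_{A_k}\Big[\Phi(x-y)-\sum_{|\alpha|<L}\tfrac{(-y)^\alpha}{\alpha!}\partial^\alpha\Phi(x)\Big]\,f_k(y)\,dy.
\]
Each subtracted term is harmonic in $\R^d\setminus\{0\}$, so $\Delta u_k=f_k$ in $\R^d\setminus\overline B_1$; its purpose is to cancel the slowest-decaying multipole moments of the Newton potential. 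For $|x|\ge 4\cdot 2^k$, the standard Taylor remainder bound $|\Phi(x-y)-\sum_{|\alpha|<L}\tfrac{(-y)^\alpha}{\alpha!}\partial^\alpha\Phi(x)|\lesssim |y|^L|x|^{-(d-2+L)}$ yields
\[
|u_k(x)|\lesssim |x|^{-(d-2+L)}\,2^{k(L+d-\gamma-2)}.
\]
For $1\le|x|\le 2^{k-1}$, I would bound the Newton potential crudely by $2^{-k(d-2)}\|f_k\|_{L^1}\lesssim 2^{-k\gamma}$ and each subtracted term by $2^{k|\alpha|}\|f_k\|_{L^1}|x|^{-(d-2+|\alpha|)}$; summing over $|\alpha|<L$, the dominant contribution comes from $|\alpha|=L-1$ and gives $|u_k(x)|\lesssim 2^{-k\gamma}+2^{k(L+d-\gamma-3)}|x|^{-(d-3+L)}$. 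For the transition regime $|x|\sim 2^k$, a standard local $L^2$/Calder\'on--Zygmund estimate for the Newton potential applied on one or two annuli handles the $O(1)$ remaining indices.

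\textbf{Summation.} Fix $r=|x|\ge 1$ and split $\sum_k u_k(x)$ into the regimes $2^k\le r/4$, $2^k\sim r$, and $2^k\ge 2r$. The first sum is geometric in the far-field bound, convergent because $L+d-\gamma-2>0$, and totals $\lesssim r^{-\gamma}$. The third sum is geometric in the near-field bound, convergent because $L+d-\gamma-3<0$, and also totals $\lesssim r^{-\gamma}$. The middle regime contains only $O(1)$ indices, each handled by the local estimate. Collecting, $|u(x)|\lesssim r^{-\gamma}$ for $r\ge 1$, and integrating $r^{-2\gamma-2}$ over $\{|x|\ge R\}$ produces $\int_{|x|\ge R}|u|^2/|x|^2\,dx\lesssim R^{d-2\gamma-2}$, which is \eqref{e.udecay}.

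\textbf{Main obstacle.} The delicate point is matching the far- and near-field bounds across $|x|\sim 2^k$ while verifying that the subtracted harmonic polynomials $\partial^\alpha\Phi(x)$, which blow up at the origin, do not break summability; the restriction $|x|\ge 1$ of the exterior domain plays a critical role. The hypothesis $\gamma\notin\mathbb N$ enters precisely here: it guarantees an integer $L$ for which both geometric series above converge strictly, and rules out borderline logarithmic corrections. An alternative, more invariant route would decompose $f$ in spherical harmonics and solve the resulting second-order ODE mode by mode via variation of parameters; in that setup the integer restriction on $\gamma$ reappears as a non-resonance condition between $\gamma+1$ and the characteristic exponents $r^\ell$, $r^{-\ell-d+2}$ of each angular mode.
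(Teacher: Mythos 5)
Your proof is correct but takes a genuinely different route from the paper. The paper extends $f$ to all of $\R^d$, expands it in spherical harmonics $f=\sum_j f_j(r)\Phi_j(\omega)$, and solves the radial ODE $\mathcal L_j u_j=f_j$ mode by mode with an explicit double integral, choosing whether to integrate from $0$ or from $\infty$ according to whether the characteristic exponent $\gamma_j^-$ of mode $j$ lies below or above $\gamma$; the hypothesis $\gamma\notin\mathbb N$ ensures no mode is borderline. You instead decompose $f$ dyadically in scale, $f=\sum_k f\mathbf 1_{\{2^k\le|x|<2^{k+1}\}}$, and solve for each piece via the Newton potential corrected by subtracting its multipole expansion up to order $L-1$, with $L$ the unique integer in $(\gamma-d+2,\gamma-d+3)$; the same non-integrality hypothesis enters through the existence of $L$ and the strict convergence of both the near-field and far-field geometric tails. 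The two arguments are essentially dual (scale decomposition versus angular decomposition, multipole subtraction versus choice of integration limits in the radial ODE); yours is, as you note, arguably more robust to perturbations of the operator (cf.\ Remark~\ref{r:general} and \cite{BGO}), while the paper's explicit formulas give cleaner constants depending only on $d,\gamma$. One small imprecision to flag: the pointwise bound $|u(x)|\lesssim r^{-\gamma}$ you announce after ``Collecting'' does not actually follow for $d\geq 4$, since the Newton potential of an $L^2$ density is not pointwise bounded in the transition regime $|x|\sim 2^k$. What you genuinely have there (and correctly indicate via Calder\'on--Zygmund) is an $L^2$ bound on annuli; since the target \eqref{e.udecay} is an $L^2$ estimate, this suffices --- one sums the near- and far-field pointwise bounds together with the transition-regime $L^2$ bound annulus by annulus, rather than integrating a global pointwise bound. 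With that bookkeeping adjustment the argument is complete.
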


	Before proving Lemma~\ref{l:decay}, we explain why, together with rescaled elliptic estimates, it implies Lemma~\ref{l:decaycorrecptwise}.
	
	\begin{proof}[Proof of Lemma~\ref{l:decaycorrecptwise}]
		The assumption on $f$ implies that it satisfies the $L^2$ decay in the assumption of Lemma~\ref{l:decay}, so we obtain $u$ such that $\Delta u=f$ in $\R^d\setminus B_1$ and
		\begin{align*}
			\left(\frac{1}{R^d}\int_{\abs{x}\geq R}\frac{\abs{u}^2}{|x|^2}\, dx\right)^{\frac 12}
			\lesssim
			\frac{1}{R^{\gamma+1}}\qquad\forall R\geq 1,
		\end{align*}
		and the pointwise bound \eqref{eq:uptwisebound} in the conclusion of Lemma~\ref{l:decaycorrecptwise} follows from rescaled elliptic estimates.
		Explicitly, consider $\hat u(\hat x)=u(R\hat x)$ which solves $\Delta \hat u = \hat f$, where $\hat f(\hat x):=R^2 f(R\hat x)$, then from interior elliptic estimates (see e.g. \cite{gilbargtrudinger}) we have
		\begin{align*}
			\sup_{B_3\setminus B_2}\left( |\hat u| + |\nabla \hat u| \right)
			& \lesssim \left(\int_{B_4\setminus B_1}|\hat u|^2  \right)^{\frac 12} + \sup_{B_4\setminus B_1}|\hat f|\\
			&\lesssim R \left(\frac{1}{R^d}\int_{|x|\geq R}\frac{|u|^2}{|x|^2}  \right)^{\frac 12}+\frac{1}{R^{\gamma}},
		\end{align*}
		from which, scaling back, we infer \eqref{eq:uptwisebound}.
	\end{proof}

	Next we prove Lemma~\ref{l:decay}. Before doing so,
	we  recall some facts concerning spherical harmonics 
	(that is, homogeneous harmonic polynomials), referring the reader to \cite{Stein} for details.
	The Laplace-Beltrami operator on $\mathbb S^{d-1}$ diagonalizes as
	\begin{equation*}
		-\Delta_{\mathbb S^{d-1}}\Phi_j =\lambda_j \Phi_j,\qquad 0= \lambda_0 \leq \lambda_1\leq\cdots
	\end{equation*}
	The set $\lbrace \lambda_j\rbrace_{j\in\mathbb N}$ coincides with $\lbrace k^2 + k(d-2)\rbrace_{k\in\mathbb N}$. 
	The eigenfunctions corresponding to $k^2 + k(d-2)$ span the homogeneous harmonic polynomials 
	%(spherical harmonics) 
	of degree $k$. 
	We choose them normalized in $L^2(\mathbb S^{d-1})$ so they form an orthonormal Hilbert basis of this space. 
	For  a $W^{2,2}_{loc}$ function $w\colon (0,\infty) \to \RR$ we have
	\begin{equation} \label{e.Lj}
		\Delta (w(r)\Phi_j(\omega)) =(\mathcal L_j w )(r) \Phi_j(\omega),\qquad\mathcal L_j =\partial_{rr} +\frac{d-1}{r}\partial_r -\frac{\lambda_j}{r^2}.
	\end{equation}
	The solutions of $\mathcal L_j w=0$ are linear combinations of $r^{\gamma_j^+}$ and $r^{-\gamma_j^-}$, where $\gamma_j^\pm\geq 0$ are given by
	\begin{align*}
		\gamma_j^+ &= \sqrt{\left(\frac{d-2}{2}\right)^2 +\lambda_j} - \frac{d-2}{2}   = k \qquad &\text{for }\lambda_j =k^2+k(d-2),\\
		\gamma_j^- &= \sqrt{\left(\frac{d-2}{2}\right)^2 +\lambda_j} + \frac{d-2}{2}   = k +d-2 \qquad &\text{for }\lambda_j =k^2+k(d-2).
	\end{align*}
	The decay rate $\gamma >d-2$, $\gamma\notin\mathbb N$, is fixed and we denote by $j_0=j_0(\gamma)$ the integer $j_0\geq 0$ such that
	\begin{align*}
		&\left\lbrace j\in \mathbb N \colon \gamma_j^- <\gamma\right\rbrace
		=\lbrace 0,\ldots, j_0\rbrace,\\
		& \left\lbrace j\in \mathbb N \colon \gamma_j^- > \gamma\right\rbrace
		=\lbrace j_0 + 1, j_0 +2,\ldots \rbrace.
	\end{align*}
	%We observe that
	%\begin{equation} \label{e.gamj0}
	%\gamma_{j_0+1}^\pm =\gamma_{j_0}^{\pm} + 1.
	%\end{equation}
	%Any function $f\in L^2(\R^d)$ admits a spherical harmonics expansion
	%\begin{equation*}
	%f =\sum_{j\geq 0} f_j(r)\Phi_j(\omega),
	%%\qquad \sum_{j\geq 0}\int_0^\infty f_j(r)^2 r^{d-1}\, dr <\infty,
	%\end{equation*}
	%and we set
	%\begin{equation*}
	%\Pi^+f =\sum_{j >j_0} f_j(r)\Phi_j(\omega),\qquad \Pi^-f = \sum_{0\leq j\leq j_0} f_j(r)\Phi_j(\omega).
	%\end{equation*}
	
	\begin{proof}[Proof of Lemma~\ref{l:decay}]
		We extend $f$ to be defined in $\R^d$, with the property that 
		\begin{align*}
			\left(\int_{\abs{x}\leq 1} \abs{x}^2f^2 \, dx\right)^{\frac 12} &\leq 1,
		\end{align*}
		and will construct a function $u$ such that $\Delta u=f$ in $\R^d\setminus \lbrace 0\rbrace$. The function  $f\in L^2(\R^d)$ admits a spherical harmonics expansion
		\begin{equation*}
			f =\sum_{j\geq 0} f_j(r)\Phi_j(\omega),
		\end{equation*}
		and the decay assumption \eqref{e.fdecay} on $f$ amounts to
		\begin{align}\label{eq:fjdecay}
			\sum_{j\geq 0}\int_R^\infty  f_j(r)^2r^{d+1}\, dr \leq R^{d-2\gamma-2}.
		\end{align}
		We define $u$ as 
		\begin{align*}
			u:=\sum_{j\geq 0} u_j(r)\Phi_j(\omega),
		\end{align*}
		where $u_j\in W^{2,2}_{loc}(0,\infty)$ satisfy
		\begin{align*}
			\mathcal L_j u_j =f_j.
		\end{align*}
		To write down an explicit formula for $u_j$ we rewrite $\mathcal L_j$, defined in \eqref{e.Lj}, as
		\begin{align*}
			\mathcal L_j u  =r^{-d+1+\gamma_j^-}\partial_r  [  r^{d-1-2\gamma_j^-}
			\partial_r ( r^{\gamma_j^-}u) ],
		\end{align*}
		and define
		\begin{align} \label{e.ujdef}
			u_j(r)&=
			\left\lbrace
			\begin{aligned}
				r^{-\gamma_j^-}\int_r^\infty t^{2\gamma_j^-+1-d}\int_t^\infty s^{d-1-\gamma_j^-} f_j(s)\, ds\, dt &\qquad\text{if }j\in\lbrace 0,\ldots, j_0\rbrace,\\
				r^{-\gamma_j^-}\int_0^r t^{2\gamma_j^-+1-d}\int_t^\infty s^{d-1-\gamma_j^-} f_j(s)\, ds \, dt &\qquad\text{if }j \geq j_0 +1.
			\end{aligned}
			\right.
		\end{align}
		This is well defined because  for any $t >0$ using Cauchy-Schwarz,  \eqref{eq:fjdecay}  with the choice $R = t$, and the fact that $\gamma_j^-\geq d-2>0$, we can estimate the inner integral by
		\begin{align}\label{eq:estimfj1}
			\int_t^\infty s^{d-1-\gamma_j^-} \abs{f_j(s)}\, ds
			&\leq \left(\int_t^\infty s^{-2-2\gamma_j^-}s^{d-1}ds\right)^{\frac 12}
			\left(\int_t^\infty s^2f_j(s)^2s^{d-1}ds\right)^{\frac 12}  \\
			&\leqslant \frac{1}{\sqrt{2\gamma_j^- + 2-d}}t^{\tfrac{d}{2} - \gamma_j^- - 1} t^{\tfrac{d}{2} - \gamma - 1} = \frac{1}{\sqrt{ 2\gamma_j^- + 2-d}} t^{d - \gamma - \gamma_j^- -2} . \nonumber
		\end{align}
		Furthermore, as  $t\mapsto t^{2\gamma_j^- + 1 - d }t^{d-2-\gamma - \gamma_j^-} = t^{\gamma_j^--\gamma-1}$ is integrable near $\infty$ if $\gamma_j^-<\gamma$, i.e., if  $j\leq j_0$; and is integrable near 0 if $\gamma_j^->\gamma$, i.e., if  $j\geq j_0 +1$, the functions $u_j$ in \eqref{e.ujdef} are well-defined. 
		
		Let $j\leq j_0$ and set
		\begin{align*}
			\alpha:=\gamma +\gamma_{j_0}^- +1 -d,
		\end{align*}
		so that  
		$2\gamma +1 - d >\alpha > 2\gamma_j^-+1 - d$. By \eqref{eq:estimfj1} and Cauchy-Schwarz we have
		\begin{align*}
			|u_j(r)|^2 &
			\leq \frac{r^{-2\gamma_j^-}}{2+2\gamma_j^--d}\left( 
			\int_r^\infty t^{\gamma_j^--\frac{d}{2}} 
			\left(\int_t^\infty s^2f_j(s)^2s^{d-1}ds\right)^{\frac 12}\, dt
			\right)^2 \\
			& = \frac{r^{-2\gamma_j^-}}{2+2\gamma_j^--d}\left( 
			\int_r^\infty t^{\gamma_j^--\frac{d}{2}-\frac{\alpha}{2}} 
			t^{\frac{\alpha}{2}}\left(\int_t^\infty s^2f_j(s)^2s^{d-1}ds\right)^{\frac 12}\, dt
			\right)^2 \\
			& \leq 
			\frac{r^{-2\gamma_j^-}}{2+2\gamma_j^--d}
			\int_r^\infty t^{2\gamma_j^--d-\alpha}\, dt \int_r^{\infty}t^{\alpha} \left(\int_t^\infty s^2f_j(s)^2s^{d-1}ds\right)\, dt \\
			& = \frac{r^{-d+1-\alpha}}{(2+2\gamma_j^--d)(\alpha-2\gamma_j^- +d - 1)}
			\int_r^{\infty}t^{\alpha} \left(\int_t^\infty s^2f_j(s)^2s^{d-1}ds\right)\, dt\\
			&\leq 
			\frac{r^{-d+1-\alpha}}{(d-2)(\gamma-\gamma_{j_0}^-)}
			\int_r^{\infty}t^{\alpha} \left(\int_t^\infty s^2f_j(s)^2s^{d-1}ds\right)\, dt,
		\end{align*}
		where in the last line, we used that $\gamma_j^- \geqslant d-2$ so that $ 2 + 2\gamma_j^- - d \geqslant d-2,$ and that $\gamma + \gamma_{j_0}^- - 2\gamma_{j}^- \geqslant \gamma - \gamma_{j_0}^-,$ when $j \leqslant j_0.$
		Summing and using \eqref{eq:fjdecay}, we deduce
		\begin{align*}
			\sum_{j=0}^{j_0} \frac{|u_j(r)|^2}{r^2} &
			\leq \frac{r^{-d-1-\alpha}}{(d-2)(\gamma-\gamma_{j_0}^-)}
			\int_r^{\infty}t^{\alpha} \left(\sum_{j=0}^{j_0}\int_t^\infty s^2f_j(s)^2s^{d-1}ds\right)\, dt \\
			& \le  \frac{r^{-d-1-\alpha}}{(d-2)(\gamma-\gamma_{j_0}^-)} \int_r^{\infty}t^{\alpha +d-2\gamma -2}\, dt \\
			& =\frac{r^{-2\gamma-2}}{(d-2)(\gamma-\gamma_{j_0}^-)(2\gamma +1 -d -\alpha)}\\
			& \leq \frac{r^{-2\gamma-2}}{(d-2)(\gamma-\gamma_{j_0}^-)^2}.
		\end{align*}
		Similarly, for $j\geq j_0+1$ we set 
		\begin{align*}
			\beta = \gamma +\gamma_{j_0+1}^- +1 -d,
		\end{align*}
		which satisfies $2\gamma +1 - d <\beta < 2\gamma_j^-+1 - d$.
		Using \eqref{eq:estimfj1} and Cauchy-Schwarz we find
		\begin{align*}
			|u_j(r)|^2
			&
			\leq \frac{r^{-d+1-\beta}}{(d-2)(\gamma_{j_0+1}^- -\gamma)}
			\int_0^{r}t^{\alpha_j} \left(\int_t^\infty s^2f_j(s)^2s^{d-1}ds\right)\, dt,
		\end{align*}
		so that, we similarly obtain from \eqref{eq:fjdecay} that
		\begin{align*}
			\sum_{j=j_0+1}^{\infty} \frac{|u_j(r)|^2}{r^2}
			& \leq \frac{r^{-2\gamma-2}}{(d-2)(\gamma_{j_0+1}^- -\gamma)^2}.
		\end{align*}
		We conclude that
		\begin{align*}
			\sum_{j=0}^\infty
			\frac{|u_j(r)|^2}{r^2}
			& \leq \frac{1}{d-2}\left(\frac{1}{(\gamma-\gamma_{j_0}^-)^2}+\frac{1}{(\gamma_{j_0+1}^- -\gamma)^2}\right) r^{-2\gamma -2}.
		\end{align*}
		Therefore, since $\gamma> d-2$,
		\begin{align*}
			\frac{1}{R^d}\int_{|x|\geq R}\frac{|u|^2}{|x|^2}\, dx
			&=\frac{1}{R^d}\int_R^\infty\left(  \sum_{j=0}^\infty \frac{|u_j(r)|^2}{r^2}\right) \, r^{d-1}\, dr \\
			& 
			\leq \frac{1}{d-2}\left(\frac{1}{(\gamma-\gamma_{j_0}^-)^2}+\frac{1}{(\gamma_{j_0+1}^- -\gamma)^2}\right) \frac{R^{-2\gamma-2}}{2\gamma+2-d}\\
			&
			\leq \frac{1}{(d-2)^2}\left(\frac{1}{(\gamma-\gamma_{j_0}^-)^2}+\frac{1}{(\gamma_{j_0+1}^- -\gamma)^2}\right)  R^{-2\gamma-2},
		\end{align*}
		which proves \eqref{e.udecay}.
	\end{proof}

	\bibliographystyle{alpha}
	\bibliography{ref}
	
	\end{document}